\newtheorem{theorem}{Theorem}
\newtheorem{corollary}{Corollary}
\newtheorem{remark}{Remark}
\newtheorem{proposition}{Proposition}
\newtheorem{lemma}{Lemma}
\newtheorem{definition}{Definition}
\begin{document}
%
% paper title
% can use linebreaks \\ within to get better formatting as desired
\title{\huge The Robust Minimal Controllability Problem}

%
%\author[Ramos]{Guilherme Ramos}$^{,\star}$\ead{guilherme.ramos@ist.utl.pt}\qquad     % Add the 
%\author[Ramos,Pequito]{S\'ergio Pequito}$^{,\star}$\ead{spequito@andrew.cmu.edu}    \qquad            % e-mail address 
%\author[Pequito]{Soummya Kar}\ead{soummyak@andrew.cmu.edu}  \hspace{3cm} % (ead) as shown
%\author[Ramos,Aguiar]{A. Pedro Aguiar}\ead{pedro.aguiar@fe.up.pt} \quad   % (ead) as shown
%\author[Ramos2]{Jaime Ramos}\ead{jabr@math.ist.utl.pt}  % (ead) as shown
%
%\address[Ramos]{Institute for System and Robotics, Instituto Superior T\'ecnico,  University of Lisbon, Lisbon, Portugal}  % Please supply                                              
%\address[Pequito]{Department of Electrical and Computer Engineering, Carnegie Mellon University, Pittsburgh, USA}             % full addresses
%\address[Aguiar]{Department of Electrical and Computer Engineering, Faculty of Engineering, University of Porto, Porto, Portugal}        % here.
%\address[Ramos2]{SQIG-Instituto de Telecomunica\c{c}\~oes, Department of Mathematics, Instituto Superior T\'ecnico, \\  University of Lisbon, Lisbon, Portugal}        % here.

%\begin{keyword}                           % Five to ten keywords,  
%Control Systems Design, Controllability, NP-completeness, Structural Controllability. %Linear Systems; Structural Systems Theory; Computational Complexity               % chosen from the IFAC 
%\end{keyword}      

\author{\IEEEauthorblockN{ S\'ergio Pequito $^{\dagger,\ddagger,\ast}$ \quad Guilherme Ramos $^{\ddagger,\ast,\star}$\quad Soummya Kar $^{\dagger}$ \\ A. Pedro Aguiar $^{\ddagger,\diamond}$ \qquad Jaime Ramos $^{\star}$}
\thanks{ $^{\ast}$ The first two authors contributed equally to this work.

$^{\dagger}$ Department of Electrical and Computer Engineering, Carnegie Mellon University, Pittsburgh, PA 15213

$^{\ddagger}$  Institute for Systems and Robotics, Instituto Superior T\'ecnico, Technical University of Lisbon, Lisbon, Portugal

$^{\diamond}$ Department of Electrical and Computer Engineering, Faculty of Engineering, University of Porto, Porto, Portugal

$^{\star}$ SQIG-Instituto de Telecomunica\c{c}\~oes, Department of Mathematics, Instituto Superior T\'ecnico,  University of Lisbon, Lisbon, Portugal

The second author also acknowledge the partial support from the DP-PMI and Funda\c{c}\~ao para a Ci\^encia (FCT), namely through scholarship SFRH/BD/52242/2013
}}

%
%
%\thanks{The first two authors contributed equally to this work. }
%\thanks{This work was partially supported by grant SFRH/BD/33779/2009, from Funda\c{c}\~ao para a Ci\^encia e a Tecnologia  (FCT) and  the CMU-Portugal (ICTI) program, and by projects CONAV/FCT-PT (PTDC/EEACRO/113820/2009), FCT (PEst-OE/EEI/LA0009/2013),  MORPH (EU FP7 No. 288704), and NSF grant 1306128.}
%\thanks{Further, the second author also acknowledge the partial support from the DP-PMI and Fundação para a Ciência e a Tecnologia (Portugal), namely through scholarship SFRH/BD/52242/2013}
%

\maketitle

%\onecolumn

\begin{abstract}
In this paper, we address two minimal controllability problems, where the goal is to determine a minimal subset of state variables in a linear time-invariant system to be actuated to ensure controllability under additional constraints. First, we study the problem of characterizing the sparsest input matrices that assure controllability when the autonomous dynamics' matrix is simple. Secondly, we build upon these results to describe the solutions to the robust minimal controllability problem, where the goal is to determine the sparsest input matrix ensuring controllability when specified number of inputs fail. Both problems are NP-hard, but under the assumption that the dynamics' matrix is simple, we show that it is possible to reduce these two problems to set multi-covering problems. Consequently, these problems share the same computational complexity, i.e., they are NP-complete, but polynomial algorithms to approximate the solutions of a set multi-covering problem can be leveraged to obtain close-to-optimal solutions to either of the minimal controllability problems.
\end{abstract}

\IEEEpeerreviewmaketitle

\vspace{-0.2cm}

\section{Introduction}

The problem of guaranteeing  that a dynamical system can be driven toward the  desired  state regardless of its initial position is a fundamental question that has been  studied in control systems and it is referred to as \emph{controllability}. Several applications, for instance, control processes, control of large flexible structures, systems biology and power systems  \cite{Magnus,largeScale,Skogestad04a} rely on the notion of controllability to safeguard their proper functioning. Furthermore, as the systems become larger (i.e., the dimension of their state space), we (often) aim to  identify a relatively small subset of  state variables that ensure the controllability of the system, for instance, due to economic constraints~\cite{nphard}. Consequently, it is natural to pose the following question. 

$\mathcal Q_1$: \textit{Which state variables need to be directly actuated to ensure the controllability of a dynamical system?}

Question $\mathcal Q_1$ can be formally captured by the  \emph{minimal controllability problem} (MCP)~\cite{nphard} that aims to determine the minimum number of state variables  that need to be actuated to ensure system's controllability.  Unfortunately,  the MCP problem was shown to be NP-hard~\cite{nphard}, which implies that a polynomial solution to determine its solution is unlikely to exist.

The MCP is also fundamental to understand resilience and robustness properties of dynamical systems since it unveils which variable need to be actuated. These resilience/robustness properties are crucial to coping with  the adverse nature of the environments where the actuators are deployed and, due to the wear and tear of the materials, some of these actuators may malfunction over time. In addition, the inputs can malfunction due to  a malicious external agent  who aims to tamper with the inputs to compromise the system behavior. In fact, a classical example  of such malicious  attack  is the Stuxnet malware incident~\cite{langner2011robust}, in which the controller's input response to a tempered measured output lead the system  away from its normal operating conditions.

Therefore, from a design perspective, we would like to deploy actuators in the system such that any subset with at most a specified number of actuators can fail without compromising the controllability of the system. Subsequently, invoking similar reasons to the MCP, we can seek to address the robust MCP (rMCP) that aims to determine the sparsest input matrix that ensures controllability if at most a specified number of actuators fail. It is important to mention that both minimal controllability problems can be stated regarding observability, by invoking the duality between controllability and observability in LTI systems~\cite{Hespanha09}. In particular,~\cite{shoukry2014event,2015arXiv150307125C,2012arXiv1205.5073F} provide necessary and sufficient conditions concerning the sensor deployment to ensure that a reliable estimate of the system is recovered. More importantly, those conditions can be achieved by design, by solving the rMCP.

\textbf{Related Work: } The understanding of which state variables need to be actuated to asseverate certain properties of the system has  been an active research area~\cite{PinningWangS14}. Initially,  the goal was to establish stability and/or asymptotic stability of the dynamics for reference point, for instance, consensus or agreement value~\cite{RenSurvey,rezatac07}. The trend has changed to assure that the system is controllable, since (often)  we want to ensure that a control law exists such that an arbitrary  goal or desired state is achieved in finite time. 

This paper follows up and subsumes some of the existing literature where the dynamics' matrix is assumed to be the Laplacian, symmetric (modeling undirected graphs) and/or irreducible (modeling directed graphs with the digraph representation being a strongly connected component). In~\cite{CirculantNet} the controllability of circulant networks is analyzed by exploring the Popov-Belevitch-Hautus eigenvalue criterion, where the eigenvalues are characterized using the \mbox{Cauchy-Binet} formula. The controllability in multi-agents with Laplacian dynamics was initially explored in~\cite{1428782}. Later, in~\cite{multiagents, EMCCB12}, the controllability for Laplacian dynamics is studied, and necessary and sufficient conditions are given in terms of partitions of the graph.  In~\cite{controlPathCycles}, the controllability is explored for paths and cycles, and later extended by the same authors to the controllability of grid graphs  by means of reductions and symmetries of the graph~\cite{SymmetriesNotarstefano}, and considering dynamics that are scaled Laplacians. In~\cite{RegularDistance,mingcao} the controllability is studied for  strongly regular
graphs and distance-regular graphs. Recently, in~\cite{controlLaplac,chapmanCDC14} new insights on the controllability of  Laplacian dynamics are given regarding the uncontrollable subspace. In addition, in~\cite{pasqZampieri} the controllability of isotropic and anisotropic networks is analyzed.  

Furthermore,~\cite{controlLaplac} concludes by pointing out that further study of non-symmetric dynamics and the controllability is required -- which we address in the present paper. Note that the MCPs lie within the framework of sparse optimization subject to a rank constraint. Further, we notice that the problem addressed does not belong to known classes where polynomial solutions are available~\cite{Foucart}, nor it resources to convex relaxation schemes, where no sub-optimality guarantees are available. Instead, we consider a much less restrictive assumption: $A$ is a \emph{simple} matrix, i.e., all eigenvalues are distinct. Furthermore, there are several applications where $A$ satisfies this assumption, for instance, all dynamical systems modeled as random networks of the Erd\H{o}s-R\'enyi type~\cite{Tao}, as well as several known dynamical systems used as benchmarks in control systems engineering~\cite{OgataControl}. 

Observe that the MCP problem presents both continuous and discrete optimization properties, captured by the controllability property and the number of non-zero entries, respectively. To avoid the nature of this problem, in~\cite{nphard} the non-zero entries of the input matrix were randomly generated. In the present paper, we `decouple' the continuous and discrete optimization properties, and show that by first solving the discrete nature of the problem, it is always possible to deterministically obtain a solution to MCP in a second phase. Besides, the first step reduces the MCP to the set covering problem -- well known to be NP-hard. Nonetheless, the set covering problem is likely one of the most studied NP-hard problems (probably second only to the SAT problem). Subsequently, although the set covering problem is NP-hard, some subclasses of the problem are equipped with sufficient structure that can be leveraged to invoke a polynomial algorithm that approximate the solution with `almost' optimality guarantees~\cite{Bronnimann1995}.  This contrasts with the approach proposed in~\cite{nphard}, where an approximated solution particular to the MCP problem was provided. In addition,  we study the rMCP which has not been previously addressed in the literature. Similarly to the MCP, we show that the rMCP can be polynomially reduced to the \emph{set multi-covering problem}, i.e., a set covering problem that allows the same elements to be covered a predefined number of times. Furthermore, extensions of polynomial approximation algorithms are also available with similar optimality guarantees.

Alternatively, in~\cite{PequitoJournal}  instead of determining the sparsest input matrix  ensuring the controllability, the aim is to determine the sparsest input matrix that ensures \emph{structural controllability}, which we refer to as the \emph{minimal structural controllability problem} (MSCP) -- see Section~\ref{prelimSec} for formal definitions and problem statement. Briefly, the MSCP  focus on the structure of the dynamics, i.e., the location of zeros/non-zeros, and the obtained sparsest input matrix is such that for \emph{almost all} matrices satisfying the structure of the dynamics and the input matrix, the system is controllable~\cite{dionSurvey}. Notwithstanding,   in the present paper, we provide an example where the solution to the minimal structural controllability problem \emph{is not} necessarily a solution to the minimal controllability problem when the dynamics' matrix is simple; hence, \emph{disproving the general belief that a solution to MSCP is a solution to MCP in such cases}.  Further, we emphasize that the solution to the MSCP has been fully explored in \cite{PequitoJournal} and can be determined by recurring to polynomial complexity algorithms; more precisely, $\mathcal O(n^3)$ where $n$ is the dimension of the state space.  In addition, the minimum number of state variables to achieve structural controllability can
account for the scenario where actuating state variables incur in different cost~\cite{PequitoJ3}.
Further, if the collection of possible actuators is given a priori and we seek the minimum
number of these actuators to ensure structural controllability, then the problem is NP-hard~\cite{PequitoCMIS}.
Finally,~\cite{PequitoJ7} studies  the structural counterpart of the rMCP under one failure, which is also proved to be NP-hard, and shown to be reducible to a  \emph{weighted} set covering problem. In particular, the reductions and the objects captured by the sets in the set covering problem in~\cite{PequitoJ7} are entirely different from those of the problems explored in this paper, mainly, due to the nature of the problems. \hfill $\circ$

\textbf{Main Contributions} of the present paper are as follows: (\emph{i}) we characterize the exact solutions to the MCP; (\emph{ii}) we show that for a given dynamics'  matrix almost all input vectors satisfying a specified structure are solutions to the MCP; (\emph{iii}) we prove that the rMCP is an NP-hard problem; (\emph{iv})  we  characterize the exact solutions to the  rMCP; (\emph{v}) we show that the decision version of both MCPs are NP-complete;  (\emph{vi}) we provide approximated solutions to both MCPs and discuss their optimality guarantees;  and, finally, in (\emph{vii}) we discuss the limitations of the proposed methodology. \hfill $\circ$

The remainder of this paper is organized as follows.
In Section~\ref{probFormSec}, we formally state both MCPs addressed in this paper. Next, in Section~\ref{prelimSec}, we review concepts from computational complexity and control systems that are essential to keep this paper self-contained. In Section~\ref{MCPSection}, we present the main results of this paper: we characterize 
the solutions to the MCPs, their complexity, and polynomial algorithms that approximate the solution. Finally, in Section~\ref{secCompStrucvsNonStruct} we provide some examples that illustrate the main results of the paper and discuss the limitations of the proposed methodology.

\vspace{0.4cm}

\textbf{Notation: } We denote vectors by small font letters such as $v,w,b$ and its corresponding entries by subscripts; for example, $v_i$ corresponds to the $i$-th entry in the vector $v$. A collection of vectors is denoted by $\{v^j\}_{j\in \mathcal J}$, where the superscript indicates an enumeration of the vectors using indices from a set (usually denoted by calligraphic letter) such as $\mathcal I,\mathcal J\subset \mathbb{N}$. The  number of elements of a set $\mathcal S$ is denoted by $|\mathcal S|$. Real-valued matrices are denoted by capital  letters, such as $A$, $B$, and $A_{i,j}$ denotes the entry in the $i$-th row and $j$-th column in matrix $A$. We denote by $I_n$ the $n$-dimensional identity matrix. Given a matrix $A$, $\sigma (A)$ denotes the set of eigenvalues of $A$, also known as the \emph{spectrum} of $A$. Given two matrices $M_1\in \mathbb{C}^{n\times m_1}$ and $M_2\in \mathbb{C}^{n\times m_2}$, the matrix $[M_1 \ M_2]$ corresponds to the $n\times (m_1+m_2)$ concatenated complex matrix. 
The structural pattern of a  vector/matrix (i.e., the zero/non-zero pattern) or a \emph{structural vector/matrix} have their  entries in  $\{0,\star\}$, where $\star$ denotes a non-zero entry, and they are denoted by a vector/matrix  with a bar on top of it. In other words, $\bar A$ denotes a matrix with $\bar A_{i,j}=0$ if $A_{i,j}=0$ and $\bar A_{i,j}=\star$ otherwise.  We denote by $A^\intercal$  the transpose of $A$. The function $\cdot : \mathbb{C}^n\times \mathbb{C}^n \rightarrow \mathbb{C}$ denotes the usual inner product in $\mathbb{C}^n$, i.e., $v\cdot w=v^\dagger w$, where $v^\dagger$ denotes the adjoint of $v$ (the conjugate of $v^\intercal$). With some abuse of notation, $\cdot:\{0,\star\}^n\times\{0,\star\}^n\rightarrow \{0,\star\}$ also denotes the map where $\bar v\cdot \bar w\neq 0$, with $\bar v,\bar w\in\{0,\star\}^n$ if and only if there exists $i\in\{1,\ldots,n\}$ such that $\bar v_i=\bar w_i=\star$.  Additionally, $\| v\|_0$ denotes the number of non-zero entries of the vector $v$ in either $\{0,\star\}^n$ or $\mathbb{R}^n$. Given a subspace $\mathcal H\subset\mathbb{C}^n$ we denote by $\mathcal H^\mathsf{c}$ its complement with respect to $\mathbb{C}$, i.e.,  $\mathcal H^\mathsf{c}=\mathbb{C}^n\setminus\mathcal {H}$. In addition, inequalities involving vectors are to be interpreted component-wise. With abuse of notation, we will use inequalities involving structural vectors as well -- for instance, we say $\bar{v}\geq\bar{w}$ for two structural vectors $\bar{v}$ and $\bar{w}$ if and the only if the following two conditions hold: (i) if $\bar w_{i}=0$, then $\bar v_i\in\{0,\star\}$, and  (ii) if $\bar w_{i}=\star$ then $\bar v_i=\star$.

\section{Problems Statement}\label{probFormSec}

In this paper, we focus on dynamical systems modeled by discrete-time linear time-invariant (LTI) systems, but the results are readily applicable to continuous-time LTI systems. We will neglect the output equation because we are only addressing the controllability problem. Therefore, consider systems described by \begin{equation}
x(k+1)=Ax(k)+Bu(k),\quad x(0)=x_0,
\label{dyn}
\end{equation}
where $x \in \mathbb{R}^n$ is the state of the sytem,  $u \in \mathbb{R}^p$ is the input signal exert by the actuators, and $k\in \mathbb{N}$  denotes the time instance. The matrix $A\in\mathbb{R}^{n\times n}$, which is referred to as the system dynamics' matrix describes the coupling between state variables. The matrix $B\in\mathbb{R}^{n\times p}$ is the input matrix and describes the state variables that the inputs act on. As previously mentioned, it is often desirable  the LTI system~\eqref{dyn}  be \emph{controllable}, i.e., a system can be steered towards a desirable state in at most $n$ steps despite the  initial state $x_0$, in which case the pair $(A,B)$ is said to be controllable. 

The first problem addressed in this paper is the MCP, that can be formally stated as follows.
\vspace{0.2cm}

\noindent $\mathcal P_1$:  Given the system dynamics' matrix $A$ determine the input matrix $B\in\mathbb{R}^{n\times n}$ such that
\begin{equation}
\begin{array}{cc}
B^*=\arg\min\limits_{B\in\mathbb{R}^{n\times n}} & \|B\|_0\\
\text{s.t.} & (A,B) \text{ controllable.}
\end{array}
\label{mcpStatement}
\end{equation}
Notice that the input matrix is assumed to be $n\times n$ to ensure a solution to exist, since the identity matrix always ensures system's controllability.

Alternatively, under the adverse scenarios of failure or malicious temper of the actuators, the dynamics of the system can be modeled by
\begin{equation}
x(k+1)=Ax(k)+Bu(k)+a(k),
\label{inputAttackSecurity}
\end{equation}
where the malfunctioning inputs correspond to non-zero entries in $a\in\mathbb{R}^n$ representing an alteration of the actuation in comparison with the actual value. Therefore, an extra set of actuators should be in place to ensure that it is still possible to control the system if some inputs fail, i.e., the system
\begin{equation}
	x(k+1)=Ax(k)+B_{\mathcal M\backslash \mathcal A}u(k),
\label{outputOutAttack}
\end{equation}
is controllable, where $B_{\mathcal M\backslash \mathcal A}$ consists of the subset of columns with indices in $\mathcal M\backslash \mathcal A$, the set  $\mathcal M=\{1,\ldots, p\}$ is the set of inputs' labeling indices and $\mathcal A=\{i\in \mathcal M: \ a_i(k)\neq 0, \ k\in\mathbb{N} \}$  the set of  indices of malfunctioning  actuators.  Therefore, $(A,B_{\mathcal M\backslash \mathcal A})$  is desirable to be controllable, and,  subsequently, the rMCP can be posed as follows.
\vspace{0.2cm}

\noindent $\mathcal P_2$: Given a dynamics' matrix $A\in\mathbb{R}^{n\times n}$ and the number of possible input failures $s$, determine the matrix $B^*\in\mathbb{R}^{n\times (s+1)n}$ such that 
\begin{align}
B^*=\arg \min\limits_{B\in\mathbb{R}^{n\times (s+1)n}}&\quad  \|B\|_0 \label{problemstatementeq0} \\
\text{s.t.} \qquad & (A,B_{\mathcal M\backslash \mathcal A}) \text{ is controllable},\notag\\
&  \ |\mathcal A|\le s,\ \mathcal A\subset \mathcal M,\notag
\end{align}
where $\mathcal M\subset \{1,\dots,n\}$ are the indices of the non-zero columns of the matrix $B$. Notice that, similarly to $\mathcal P_1$, the dimension of $B$ are $n\times (s+1)n$ to ensure that a solution always exist, in particular, in the worst case scenario the matrix  $B$ that concatenates $s$ times the identity matrix is a feasible solution. In practice, only the non-zero columns of $B$ matter, which we refer to as \emph{effective inputs}.

 In this paper, both MCPs proposed above will be addressed under the following two assumptions. 

\textbf{Assumption 1}: The dynamics' matrix is \emph{simple}, i.e., all the eigenvalues of $A$ are distinct. \hfill $\circ$

We notice that Assumption 1 is not very restrictive since there are several applications where $A$ satisfy this assumption. For example,  dynamical systems modeled as random networks of the Erd\H{o}s-R\'enyi type~\cite{Tao}, as well as several known dynamical systems used as benchmarks in control systems engineering~\cite{OgataControl}.

\textbf{Assumption 2}: A  \emph{left-eigenbasis} of $A$ is available, i.e., the eigenbasis consisting of left-eigenvectors of $A$. \hfill $\circ$

The second assumption is required by technical reasons, since an eigenbasis is determined using numerical methods. Therefore, in practice, it may be composed of approximated eigenvectors to a given floating-point error -- see Section~\ref{numericalAnalysis} for further discussion.

\section{Preliminaries and Terminology}\label{prelimSec}

In this section, we review some basic concepts in computational complexity theory, control systems, and structural systems theory, to keep the paper self-contained.

In what follows, we use some concepts of computational complexity theory~\cite{Coo71}, that address the classification of  (computational) problems into complexity classes. 
Formally, this classification is for \emph{decision problems}, i.e., problems with a `yes' or `no' answer.
 Further, for a decision problem, if there exists an algorithm that obtains the correct answer in a number of steps that is bounded by a polynomial  in the size of the input data of the problem, then the algorithm is referred to as an \emph{efficient} or \emph{polynomial} solution to the decision problem and the decision problem is said to be polynomially solvable or belong to the class of polynomially solvable problems. A decision problem  is said to be in NP (i.e., the class of nondeterministic polynomially  solvable problems) if, given any possible solution instance, it can be verified using a polynomial procedure whether the instance constitutes a solution to the problem or not.  It is easy to see that any problem that is polynomially solvable (in P) is also in NP, although, there are some problems in NP for which it is unclear whether polynomial solutions exist. These latter problems are referred to as being NP-complete. Consequently, the class of NP-complete problems contains those that are the \emph{hardest} among the NP problems, i.e., those that are verifiable using polynomial algorithms, but no polynomial algorithms to solve them are known to exist. Whereas the above classification is intended for decision problems, it can be immediately extended to optimization problems, by noticing that every optimization problem can be posed as a decision problem. More precisely, given a minimization problem, we can pose the following decision problem: Is there a solution to the minimization problem that is less than or equal to a prescribed value? On the other hand, if the solution to the optimization problem is obtained, then any decision version can be easily addressed. Consequently, if a (decision) problem is NP-complete, then the associated optimization problem is referred to as being NP-hard. We refer the reader to \cite{Garey:1979:CIG:578533} for an introduction to the topic. In what follows, we will consider the following NP-hard problem.

\begin{definition}[\hspace{-0.01cm}\cite{setMultiCover}]{(Minimum Set Multi-covering Problem)}
\label{setcover}
Given a set of $m$ elements $\,\mathcal{U}=\left\{1,2,\hdots,m\right\}$ referred to as universe, a collection of $n$ sets $\mathcal{S}=\left\{\mathcal S_1,\hdots,\mathcal S_n\right\}$, with $\mathcal S_j\subset \mathcal U$, with $j\in \{1,\hdots,n\}$ $\displaystyle\bigcup_{j=1}^{n}\mathcal S_j=\mathcal{U}$, and a demand function $d:\mathcal U\rightarrow \mathbb{N}$ that indicates the number of times an element $i$ needs to be covered. In other words, $d(i)$ is the minimum number of sets in $\mathcal S$ that need to be consider such that $i$ is member of all of this sets. The  minimum set multi-covering problem consists of finding a set of indices $\,\mathcal{I}^*\subseteq \left\{1,2,\hdots,n\right\}$ corresponding to the minimum number of sets covering $\mathcal U$, where every element $i\in\mathcal U$ is covered at least $d(i)$ times, i.e.,
\[\begin{array}{rl}\mathcal{J}^*=\underset{\mathcal{J}\subseteq \left\{1,2,\hdots,n\right\}}{\arg \min}&\quad |\mathcal{J}|
\\
\text{s.t. } \quad &|\{j \in \mathcal J: i\in \mathcal S_j\}| \ge d(i)\ .
\end{array}\]
In particular, we note that if $d(i)=1$ for all $i\in\{1,\ldots,n\}$, then we obtain the well known \emph{minimum set covering problem}.
\hfill$\diamond$
\end{definition}

The minimum set multi-covering problem plays a double role in this paper: (i) we reduce both MCPs to a minimum set multi-covering problem; and (ii) by polynomially reducing it to the rMCP we show the latter to be NP-hard.  A (computational) problem is said to be \emph{reducible in polynomial time} to another if there exists a procedure to transform the former to the latter using a polynomial number of operations on the size of its inputs. Such reduction is useful in determining the qualitative complexity class \cite{Garey:1979:CIG:578533} a particular problem belongs to. For instance, we will need the following result.

\begin{proposition}[\hspace{-0.01cm}\cite{Garey:1979:CIG:578533}]
Let $\mathcal P_A$ be an NP-hard problem. If there is a polynomial reduction from $\mathcal P_A$ to $\mathcal P_B$, from which a solution to $\mathcal P_A$ can be determined, then $\mathcal P_B$ is an NP-hard problem.\hfill $\diamond$\label{proposition2}
\end{proposition}

Similarly, the minimum set covering problem is used in the present paper to show the NP-completeness of the MCP, by considering the following result.

\begin{lemma}[\hspace{-0.01cm}\cite{Garey:1979:CIG:578533}]
Let $\mathcal P_A$ and $\mathcal P_B$ be two NP-hard problems, and $\mathcal P_A^d$ and $\mathcal P_B^d$ be their decision versions, respectively.  
If a problem $\mathcal P_A$ is polynomially reducible to $\mathcal P_B$ (or equivalently, their decision versions) and $\mathcal P_B$ is polynomially reducible to $\mathcal P_A$ (or equivalently, their decision versions), then both $\mathcal P_A^d$ and $\mathcal P_B^d$ are NP-complete.\hfill $\diamond$\label{proposition1}
\end{lemma}

Now, given an arbitrary LTI system~\eqref{dyn}, we will focus on the following controllability tests.

\begin{theorem}[\hspace{-0.01cm}\cite{Hespanha09}]{(PBH test for controllability using eigenvalues)}\label{popovval} The system described in ~\eqref{dyn}  is controllable \emph{if and only if}
	$\text{rank}\left(\left[\begin{array}{lr}A-\lambda \text{I}_n & B\end{array}\right]\right)=n\text{ for all }\lambda\in\mathbb{C}.$
\hfill $\diamond $
\end{theorem}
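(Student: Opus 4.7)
The plan is to prove both directions via the controllability matrix $\mathcal{C}_n = [b \ Ab \ A^2 b \ \cdots \ A^{n-1}b]$, using the standard characterization that $(A,b)$ is controllable if and only if $\mathrm{rank}(\mathcal{C}_n) = n$. I will assume this characterization as given (it is an equivalent definition in standard references such as \cite{Hespanha09}). The entire argument then reduces to relating the drop-in-rank of $[A - \lambda I_n \ b]$ at some $\lambda$ to the existence of a left null vector of $\mathcal{C}_n$.

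For the ``only if'' direction, I would work contrapositively: suppose there exists $\lambda_0 \in \mathbb{C}$ for which $\mathrm{rank}([A - \lambda_0 I_n \ b]) < n$. Then there is a nonzero row vector $v^\dagger \in \mathbb{C}^{1 \times n}$ such that $v^\dagger(A - \lambda_0 I_n) = 0$ and $v^\dagger b = 0$. The first equality makes $v^\dagger$ a left eigenvector of $A$ with eigenvalue $\lambda_0$, so by induction $v^\dagger A^k = \lambda_0^k v^\dagger$ for every $k \geq 0$. Consequently $v^\dagger A^k b = \lambda_0^k v^\dagger b = 0$ for $k = 0, 1, \ldots, n-1$, which gives $v^\dagger \mathcal{C}_n = 0$. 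Hence $\mathcal{C}_n$ does not have full row rank and the system is not controllable.

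For the ``if'' direction, I again argue contrapositively. Suppose $(A,b)$ is not controllable, so there exists a nonzero $w \in \mathbb{C}^n$ with $w^\dagger \mathcal{C}_n = 0$, i.e., $w^\dagger A^k b = 0$ for $k = 0, \ldots, n-1$. Let $\mathcal{N} = \{u \in \mathbb{C}^n : u^\dagger A^k b = 0 \text{ for all } k \geq 0\}$. By the Cayley--Hamilton theorem, $A^n$ is a polynomial in $A$ of degree at most $n-1$, which extends the vanishing identity to all $k \geq 0$, so $w \in \mathcal{N}$ and in particular $\mathcal{N} \neq \{0\}$. A direct check shows that $\mathcal{N}$ is invariant under $A^\dagger$ (if $u \in \mathcal{N}$, then $(A^\dagger u)^\dagger A^k b = u^\dagger A^{k+1} b = 0$). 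Any nonzero complex invariant subspace of $A^\dagger$ contains an eigenvector of $A^\dagger$, hence there exists a nonzero $v$ and some $\lambda_0 \in \sigma(A)$ with $A^\dagger v = \bar{\lambda_0} v$, equivalently $v^\dagger A = \lambda_0 v^\dagger$, and moreover $v^\dagger b = 0$ since $v \in \mathcal{N}$. Then $v^\dagger [A - \lambda_0 I_n \ b] = 0$, so $\mathrm{rank}([A - \lambda_0 I_n \ b]) < n$, violating the PBH condition.

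The main obstacle I anticipate is the invariant-subspace step in the converse: one must justify rigorously both that the Cayley--Hamilton closure makes $\mathcal{N}$ a genuine $A^\dagger$-invariant subspace and that every nonzero such subspace over $\mathbb{C}$ contains an eigenvector of $A^\dagger$. The latter is where the field being $\mathbb{C}$ (rather than $\mathbb{R}$) matters, since it guarantees that the restriction $A^\dagger|_{\mathcal{N}}$ has an eigenvalue. Once this step is in place, matching it with the corresponding eigenvalue of $A$ (they are conjugates of the eigenvalues of $A^\dagger$) closes the argument cleanly.
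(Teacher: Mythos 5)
Your proof is correct: both directions are the standard argument relating the PBH rank condition to the Kalman controllability matrix, with the contrapositive of the ``if'' direction handled properly via Cayley--Hamilton and the existence of an eigenvector of $A^\dagger$ in the nonzero invariant subspace $\mathcal{N}$ (which is exactly where working over $\mathbb{C}$ is needed). The paper itself gives no proof --- it simply cites this as a known result from \cite{Hespanha09} --- and your argument is the textbook proof found there, so there is nothing to reconcile.
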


%\vspace{-0.2cm}

In fact, it suffices to verify the criterion of Theorem~\ref{popovval} for each $\lambda\in \sigma (A)$. Observe that Theorem~\ref{popovval} provides a polynomial method to check the controllability of an LTI system since for each eigenvalue $\lambda$ of $A$ only the computation of the rank of $[A-\lambda I_n\ B]$ is required. Nevertheless, it does not provide any immediate information about which entries of $B$ should be different from zero and with what particular values such that the rank condition is ensured. That is, verifying if a $B$ is a solution can be achieved in P, so the controllability problem is in NP. Therefore, a naive usage of the PBH eigenvalue test would lead to a strictly combinatorial procedure for solving the MCP. Instead, we can consider the PBH test for controllability using eigenvectors.

%\vspace{-0.1cm}

\begin{theorem}[\hspace{-0.01cm}\cite{Hespanha09}]{(PBH test for controllability using eigenvectors)}\label{popov} Given \eqref{dyn}, the system is not controllable \emph{if and only if} there exists a left-eigenvector $v$ of $A$ such that $v^\dagger B= 0$.\hfill$\diamond$
\label{popovvalvector}
\end{theorem}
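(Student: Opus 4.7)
The plan is to derive Theorem~\ref{popov} directly from Theorem~\ref{popovval}, by translating the rank-deficiency condition of the eigenvalue PBH test into the existence of a left null vector of the augmented matrix $[A-\lambda I_n \ b]$, and then recognizing that such a left null vector is precisely a left-eigenvector of $A$ orthogonal to $b$.

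First I would invoke Theorem~\ref{popovval}: the pair $(A,b)$ is not controllable if and only if there exists some $\lambda\in\mathbb{C}$ such that $\text{rank}([A-\lambda I_n \ b])<n$. Since $[A-\lambda I_n \ b]$ has $n$ rows, rank deficiency is equivalent to the existence of a nonzero vector $v\in\mathbb{C}^n$ with $v^\dagger [A-\lambda I_n \ b]=0$. Splitting this identity into its two blocks yields the pair of conditions $v^\dagger(A-\lambda I_n)=0$ and $v^\dagger b=0$, i.e., $v^\dagger A=\lambda v^\dagger$ and $v^\dagger b=0$. The first condition states exactly that $v$ is a left-eigenvector of $A$ associated with the eigenvalue $\lambda$, and the second states that $v$ is orthogonal to $b$ (with respect to the standard Hermitian inner product).

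For the converse direction, if there exists a left-eigenvector $v\neq 0$ of $A$, say with $v^\dagger A=\lambda v^\dagger$, satisfying $v^\dagger b=0$, then $v^\dagger[A-\lambda I_n \ b]=0$, and hence the rows of $[A-\lambda I_n \ b]$ are linearly dependent, so its rank is strictly less than $n$. Applying Theorem~\ref{popovval} again (at this specific $\lambda$), the pair $(A,b)$ fails to be controllable.

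The argument is essentially a linear-algebraic repackaging, so there is no real obstacle; the only subtlety to watch is that the $\lambda$ arising from a left null vector of $[A-\lambda I_n \ b]$ must lie in $\sigma(A)$ (since $v^\dagger A=\lambda v^\dagger$ with $v\neq 0$ forces $\lambda$ to be an eigenvalue), which is consistent with the remark after Theorem~\ref{popovval} that it suffices to test the eigenvalue criterion on $\sigma(A)$. No additional machinery beyond the eigenvalue PBH test and the rank/null-space duality for rectangular matrices is required.
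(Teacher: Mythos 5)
Your derivation is correct: the paper cites this theorem from \cite{Hespanha09} without giving a proof, and your argument --- translating rank deficiency of the $n\times(n+1)$ matrix $[A-\lambda I_n \ \ b]$ into the existence of a nonzero left null vector, splitting it blockwise into $v^\dagger A=\lambda v^\dagger$ and $v^\dagger b=0$, and running the same identification in reverse for the converse --- is exactly the standard equivalence between the two forms of the PBH test. The one subtlety you flag (that the resulting $\lambda$ necessarily lies in $\sigma(A)$) is handled correctly, so nothing is missing.
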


%
%Let $E^l_{\lambda_i}$ be the left-eigenspace  associated with the eigenvalue $\lambda_i$. Then, the left-eigenspace associated with a matrix $A$, with $\lambda_i\in\sigma(A)=\{\lambda_1,\ldots,\lambda_p\}$ is given by $E=E^l_{\lambda_1}\oplus \ldots \oplus E^l_{\lambda_p}$, where $\oplus$ is the direct sum of the subspaces, whose corresponding algebraic multiplicities we denote by $\alpha_1,\ldots,\alpha_p$. Then, $E$ is a left-eigenbasis if $\alpha_1+\ldots+\alpha_p=n$, which implies that $\text{dim}(E^l_{\lambda_i})=\alpha_i$. Therefore, for each $\alpha_i>1$ it follows there exist $\alpha_i$ linear independent left-eigenvectors that span $E^l_{\lambda_i}$. In particular, consider  $A$ to be the $n\times n$ matrix given by
%\[\scriptsize \mathbbm{1}_n=\left[\begin{array}{ccccc}
%1&1&\cdots&1 \\
%1&1&\cdots&1 \\
%\vdots &\vdots &\ddots &\vdots \\
%1&1&\cdots&1
%\end{array}\right]_{n\times n}.\] 
%Then, a subset of possible (and distinct) left-eigenvectors associated with the eigenvalue $0$ consists of all possible eigenvectors whose entries sum up to zero, which implies that the set of all possible eigenvectors increases exponentially in~$n$. 
% Thus, rendering the use of the PBH test for controllability using eigenvectors  computationally intractable to verify controllability; more precisely,  it requires computing the entire set of left-eigenvectors, and posterior verification if each one is orthogonal to the input vector. Nonetheless, in this paper we show that the PBH eigenvector test provides an insight about the design of $B$.

To relate our results with the ones from structural systems and further understand the advantages and drawbacks of this approach, we will introduce the structural counterpart of the MCP, the \emph{minimal structural controllability problem}~(MSCP). But first, we need to review the structural counterpart of controllability~\cite{dionSurvey}.

\begin{definition}[\hspace{-0.01cm}\cite{dionSurvey}]\label{strucContr} {(Structural Controllability)}
Given an LTI system \eqref{dyn} with sparseness given by $(\bar A,\bar B)$, with $\bar A\in\{0,\star\}^{n\times n}$ and $\bar B\in\{0,\star\}^{n\times p}$, where the entries correspond to fixed zeros and free real parameters, the pair $(\bar{A},\bar B)$ is said to be structurally controllable if there  exists a controllable pair $(A,B)$, with the same sparseness as $(\bar A, \bar B)$.\hfill$\diamond$
\end{definition}

 In fact, a stronger characterization of structural controllability holds as stated in the following proposition.
%\vspace{-0.1cm}

\begin{proposition}[\hspace{-0.01cm}\cite{Reinschke}]\label{zeroLebStruct} For a structurally controllable  pair  $(\bar A, \bar B)$, the numerical realizations $(A,B)$ with the same  sparseness as $(\bar A,\bar B)$ that are non-controllable  are described by a proper variety in $\mathbb{R}^{n\times n}\times\mathbb{R}^{n\times p}$. In other words, almost all realizations respecting the structural  pattern of a structurally controllable pair  are controllable.\hfill $\diamond$
\end{proposition}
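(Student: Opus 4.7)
The plan is to express controllability as the non-vanishing of a single polynomial in the free entries of $A$ and $b$, and then invoke the definition of structural controllability to guarantee that this polynomial is not identically zero.

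First, I would recall that by the Kalman rank test, $(A,b)$ is controllable if and only if the controllability matrix $\mathcal{C}(A,b) = [b\ Ab\ \cdots\ A^{n-1}b]$ has rank $n$, or equivalently $D(A,b) := \det \mathcal{C}(A,b)\neq 0$. Cofactor expansion shows that $D$ is a polynomial in the entries of $A$ and $b$. I would then restrict attention to the realizations with sparseness $(\bar{A},\bar{b})$: the entries of $A$ and $b$ at positions where $\bar{A}_{i,j}=0$ or $\bar{b}_i=0$ are pinned to zero, while the remaining $N=\|\bar{A}\|_0+\|\bar{b}\|_0$ entries range freely over $\mathbb{C}$. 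The set of such realizations is an affine subspace $V\subset\mathbb{C}^{n\times n}\times\mathbb{C}^n$ canonically identified with $\mathbb{C}^N$. Restricting $D$ to $V$ yields a polynomial $P:\mathbb{C}^N\to\mathbb{C}$, and a realization in $V$ is non-controllable exactly when its coordinates lie in the zero set of $P$.

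The crucial step is to argue that $P$ is not the zero polynomial. This is precisely where the hypothesis of structural controllability is used: Definition~\ref{strucContr} asserts that at least one controllable pair $(A,b)$ with sparseness $(\bar{A},\bar{b})$ exists, so there is at least one point of $V$ at which $P$ does not vanish. Hence $P\not\equiv 0$. I would then invoke the classical fact that the zero locus of a non-zero polynomial on $\mathbb{C}^N$ is a proper algebraic variety, which in particular has Lebesgue measure zero. Transported back into $\mathbb{C}^{n\times n}\times\mathbb{C}^n$, the non-controllable realizations respecting the given sparseness pattern form a proper subvariety of $V$, yielding the claim.

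The main obstacle I anticipate is conceptual bookkeeping rather than computation: making sure the ambient parameter space is the affine subspace $V$ cut out by the forced zero entries, and that the only non-trivial input to the argument is the existence assertion baked into structural controllability. Everything else — the polynomial nature of $D$ and the measure-zero property of proper complex subvarieties — is classical.
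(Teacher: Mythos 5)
Your proof is correct and is the standard argument for this fact: the paper itself gives no proof, deferring entirely to the cited reference (Reinschke), and the classical proof there proceeds exactly as you describe — controllability of the single-input pair is the non-vanishing of $\det[b\ Ab\ \cdots\ A^{n-1}b]$, a polynomial in the free entries, which is not identically zero on the sparsity-constrained affine subspace precisely because structural controllability supplies one controllable witness, so the non-controllable realizations form a proper variety of Lebesgue measure zero. Your attention to the bookkeeping point that the ambient space is really $\mathbb{C}^{N}$ (the free entries) rather than all of $\mathbb{C}^{n\times n}\times\mathbb{C}^{n}$ is a welcome clarification of the proposition's slightly loose phrasing.
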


By almost all realizations, we mean that at most a set with zero Lebesgue measure will lead to numerical realizations that do not ensure  controllability.

Subsequently, the  MSCP is posed as follows: given the structural matrix  $\bar A\in\{0,\star\}^{n\times n}$ associated with the dynamics' matrix $A$,  find $\bar B$ such that
\vspace{-0.4cm}

\begin{equation}
	\begin{array}{cc}\bar B= \underset{\bar B' \in \{0,\star\}^{n\times n}}{\arg \min} & \quad \| \bar B ' \|_0 \\  \quad \text{ s.t. } &\quad (\bar A,\bar B') \text{ is structurally controllable.}
	\end{array}
\label{strucmincontprob}
\end{equation}

Now, note that, by Definition \ref{strucContr}, a pair $(A,B)$ is controllable {only if} the corresponding structural pair $(\bar A,\bar B)$ is structurally controllable.  Therefore, it is natural  first to characterize all the sparsest structures  of  input vectors that ensure structural controllability, i.e., solutions to \eqref{strucmincontprob}.  
In particular, as a consequence of Proposition~\ref{zeroLebStruct}, we have the following result which links the MCP to its structural counterpart.
%\vspace{-0.2cm}

\begin{proposition}[\hspace{-0.01cm}\cite{PequitoJournal}]\label{probstrucvsmin}
Given $A\in\mathbb{R}^{n\times n}$, a solution  $B\in \mathbb{R}^{n\times p}$ for the MCP and  a numerical realization $B'\in \mathbb{R}^{n\times p}$ of a solution to the MSCP associated with the structural matrix $\bar{A}$, we have 
\vspace{-0.2cm}
\[
\|B\|_0\ge \|B'\|_0.
\] 
\vspace{-0.6cm}

\noindent More generally, for each $B$ that solves the MCP, there exists a solution $\bar{B}'$ of the MSCP such that

 %\vspace{-0.3cm}

 $$\bar B\geq \bar B',$$
 \vspace{-0.7cm}
 
\noindent where $\bar{B}$ and $\bar B'$ denote the structural matrix associated with $B$ and $B'$, respectively. 
 Conversely, given a structural matrix $\bar{A}$ and a solution $\bar{B}'$ to the MSCP, for almost all numerical instances $A$ satisfying the structural pattern of $\bar{A}$, then almost all numerical instances  satisfying the structural pattern  of $\bar{B}'$ are solutions to the MCP associated with $A$.
\hfill $\diamond$
\end{proposition}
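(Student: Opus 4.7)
The plan is to establish the three assertions in sequence, relying on Definition~\ref{strucContr}, the SCC characterization of structural controllability in Corollary~\ref{fdic}, and the genericity result in Proposition~\ref{zeroLebStruct}.

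First, for the inequality $\|b\|_0 \geq \|b'\|_0$: if $b$ is an optimal MCP solution for $A$, then $(A,b)$ is controllable, so by Definition~\ref{strucContr} the structural pair $(\bar A, \bar b)$ is structurally controllable, i.e., $\bar b$ itself is feasible for the MSCP. Therefore its support cannot be smaller than the MSCP optimum, yielding $\|b\|_0 = \|\bar b\|_0 \geq \|\bar b'\|_0 = \|b'\|_0$ for any numerical realization $b'$ of an MSCP-optimal $\bar b'$.

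Second, to exhibit an MSCP-optimal $\bar b'$ with $\bar b \geq \bar b'$, I would invoke Corollary~\ref{fdic}. Since $\bar b$ is MSCP-feasible, each non-top linked SCC of $\mathcal D(\bar A)$ must contain at least one state index $i$ with $\bar b_i = \star$. Selecting exactly one such index per non-top linked SCC and defining $\bar b'$ to equal $\star$ at those selected indices and $0$ elsewhere yields, again by Corollary~\ref{fdic}, a feasible MSCP solution whose support size equals the number of non-top linked SCCs---which is the MSCP optimum. By construction, $\bar b \geq \bar b'$.

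Third, for the converse direction, the inequality of the first part already certifies that, for every $A$ with structure $\bar A$, the MCP optimum is bounded below by the MSCP optimum $\|\bar b'\|_0$. It thus suffices to show that for almost all such $A$ and almost all $b'$ with structural pattern $\bar b'$, the pair $(A, b')$ is controllable; combined with $\|b'\|_0 = \|\bar b'\|_0$, this forces $b'$ to attain the lower bound and hence be MCP-optimal. By Proposition~\ref{zeroLebStruct}, the non-controllable $(A, b')$ with the prescribed structural pattern form a proper algebraic variety of the joint parameter space, and a Fubini-type slicing argument then implies that, outside a proper subvariety of the $A$-parameters, the slice of non-controllable $b'$ is itself a proper subvariety in the $\|\bar b'\|_0$-dimensional $b'$-parameter space.

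The main obstacle I anticipate is rigorously transferring the joint genericity statement of Proposition~\ref{zeroLebStruct} into the nested \emph{a.e.\ $A$, a.e.\ $b'$} form required by the claim. A clean route is Fubini/Tonelli applied to the measure-zero proper variety; alternatively, one can argue directly via the polynomiality of the determinant of an appropriate $n \times n$ minor of the controllability matrix, which, viewed as a polynomial in the joint free entries, does not vanish identically (by structural controllability), and so its restriction to a generic $A$-slice is a non-identically-zero polynomial in $b'$ whose zero set is a proper subvariety.
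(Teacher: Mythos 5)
The paper does not prove this proposition itself --- it is imported verbatim from \cite{PequitoJournal} --- so there is no in-paper argument to compare against; I am assessing your proof on its own terms. Your first and third parts are sound. The feasibility-transfer argument ($ (A,b)$ controllable $\Rightarrow (\bar A,\bar b)$ structurally controllable $\Rightarrow \|\bar b\|_0$ at least the MSCP optimum) is exactly the right mechanism for the inequality, and your handling of the converse is careful where it needs to be: Proposition~\ref{zeroLebStruct} only gives a \emph{joint} proper variety in the $(A,b)$ parameters, and the Fubini slicing (or, equivalently, writing the determinant of the $n\times n$ controllability matrix as a polynomial in the free entries of $b'$ with coefficients polynomial in the free entries of $A$, not all identically zero) is precisely what converts it into the nested ``a.e.\ $A$, a.e.\ $b'$'' form; you also correctly close the loop by matching the generic controllability of $(A,b')$ against the lower bound from the first part to certify MCP-optimality.

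The one genuine gap is in your second part. Corollary~\ref{fdic} carries the standing hypothesis that $\{(x_i,x_i):i=1,\ldots,n\}\subset\mathcal E_{\mathcal X,\mathcal X}$, i.e., every diagonal entry of $\bar A$ is nonzero. Only under that hypothesis is single-input structural controllability equivalent to hitting every non-top linked SCC, and only then is the MSCP optimum equal to the number of non-top linked SCCs, which is what makes your ``select one actuated index per non-top linked SCC of the support of $\bar b$'' construction both feasible and optimal. For a general $\bar A$ the characterization involves, in addition to accessibility, the absence of dilations (a maximum-matching condition), the single-input MSCP optimum can exceed the number of non-top linked SCCs, and an arbitrary one-per-SCC subselection of a feasible $\bar b$ need not remain feasible. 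Note also that minimality under the partial order $\geq$ is weaker than minimum cardinality, so you cannot simply pass to an inclusion-minimal feasible subpattern of $\bar b$ and declare it an MSCP solution. To cover the general case you would need the full combinatorial characterization of minimal feasibility from \cite{PequitoJournal} (matchings plus SCC covering), or else state explicitly that you are proving the proposition under the self-loop assumption of Corollary~\ref{fdic}, which is the only regime the present paper actually uses (its example has all self-loops).
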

\vspace{-0.1cm}

\vspace{-0.2cm}

\section{Minimum Controllability Problems}\label{MCPSection}

In this section, we provide the main results of this paper. In Section~\ref{exactSolSection}, we show that the MCP can be exactly solved in two steps: (i) Polynomial reduction of the structural optimization problem~\eqref{mcpStatement} to a set-covering problem (Algorithm~\ref{buildsets}); and (ii)  determine a numerical parametrization of an
	input matrix $B$ with specified input structure $\bar B$, in a deterministic polynomial fashion (Algorithm~\ref{procedure}). Further, by sequentially performing the two algorithms, we are `decoupling' the discrete and continuous properties of the MCP without losing optimality (Theorem~\ref{soundness}). In other words, we treat separately the identification of the matrix pattern $\bar B$ (discrete property) and the computation of a numerical realization encompassing the $\bar B$ pattern, and ensuring controllability of the system (continuous property).  In Section~\ref{rMCPsec}, we show that rMCP is NP-hard (Theorem~\ref{NPhardrMCP}), and a similar procedure to that used to solve MCP is followed. More specifically,  we determine the sparsity of an input matrix by polynomially reducing the problem to a minimum set multi-covering problem (Theorem~\ref{mainTheorMCP}), and this can later be used to characterize  the solutions to rMCP (Theorem~\ref{generalSol2rMCP}).

%This needs to be more clear: 
%One option is to complement with  the titles of the Algorithms:
%1. Polynomial reduction of the structural optimization problem (2) to a set-covering problem
%2. Determines a numerical parametrization of an
%input matrix B with specified input structure B

Complementary to the solutions to the MCPs, in Section~\ref{complexitySec}, we show that in fact the decision versions of MCP and rMCP (under Assumption~1) are NP-complete (Theorem~\ref{thmain}). Further, in Section~\ref{approxMCPsSec}, because the MCPs are NP-hard, we discuss a possible approach that leverage existing polynomial algorithms used to determine good approximations of the solutions to the minimum set multi-covering problem (for instance, Algorithm~\ref{algorithm3}). Subsequently, we argue that the approximate solution warrants some optimality guarantees (Theorem~\ref{complexityAlg3}). Finally, in Section~\ref{numericalAnalysis}, we explore some numerical implications of waiving Assumption~2.

 \subsection{A Characterization of the MCP Solution}\label{exactSolSection}
 
In this section, we present a systematic method to obtain a solution to the MCP problem.   
First, we show that given a left-eigenbasis of the   dynamics'  matrix $A$, it is possible to polynomially reduce the MCP to the minimum set covering problem. This reduction assumes that we only have a single effective input to actuate the system, i.e., the input matrix has a single non-zero column. Notice that a feasible solution always exist because $A$ is simple. Subsequently, we say that the input vector is a solution to the MCP if the input matrix obtained consists of one effective input associated with that input. Further, in Theorem~\ref{generalSol2MCP}, we show that this can be done without loss of generality. The reduction is achieved by exploiting the PBH eigenvector criterion (Theorem~\ref{popovvalvector}) for controllability. More precisely, the reduction  is obtained  in two steps:  first, we provide a necessary condition on the structure $\bar{b}$ of the sparsest input vector $b$ (see   Lemma~\ref{lemma1}), which is obtained by formulating a minimum set covering problem (see Algorithm~\ref{buildsets}) associated with the structure (i.e., location of non-zero entries) of the left-eigenvectors of the dynamics'  matrix $A$. Secondly, we show that  a possible numerical realization  of $\bar{b}$ which solves the MCP may be generated using a polynomial construction (Algorithm~\ref{procedure}). Both algorithms (Algorithm~\ref{buildsets} and Algorithm~\ref{procedure}) have polynomial complexity in the number of state variables (Theorem~\ref{soundness}). Further, the sequential use of these algorithms  provides a systematic solution to the MCP (see Theorem~\ref{exactSolTheorem}).

The first set of results provides necessary conditions on the structure  that an input vector $b$ must satisfy to ensure controllability of $(A,b)$,  and a polynomial complexity procedure (Algorithm~\ref{buildsets}) that reduces the problem of obtaining such necessary structural patterns to a minimum set covering problem.

\begin{lemma}\label{lemma1}
	Given a collection of non-zero vectors $\displaystyle\{\bar v^j\}_{j\in\mathcal{J}}$ with $\bar v^j\in\{0,\star\}^n$,  
the procedure of finding  $\,\bar b^*\in\{0,\star\}^n$ such that 
	\begin{equation}\label{label3}\begin{array}{rc}\bar b^*=\underset{\bar b\in\{0,\star\}^n}{\arg \min}& \|\bar b\|_0\\
	\text{s.t. }\quad & \bar v^j\cdot \bar b\neq 0,\text{ for all }j\in\mathcal{J}
	\end{array}
\end{equation}
	is polynomially (in $|\mathcal J|$ and $n$) reducible to a minimum set covering problem with universe $\mathcal{U}$ and a collection $\mathcal{S}$ of sets by applying Algorithm~\ref{buildsets}.
	\hfill$\diamond$
\end{lemma}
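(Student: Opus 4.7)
The plan is to recognize that the structural inner product $\bar v^j \cdot \bar b$ is non-zero precisely when $\bar v^j$ and $\bar b$ share at least one common non-zero position, per the notation fixed in the introduction. Hence the constraint $\bar v^j \cdot \bar b \neq 0$ for all $j \in \mathcal{J}$ is a ``hitting'' requirement: the support of $\bar b$ must intersect the support of every $\bar v^j$. Dualizing this hitting view gives immediately a set-cover instance, which is what Algorithm~\ref{buildsets} presumably constructs.

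Concretely, I would first identify, for each coordinate $i \in \{1,\ldots,n\}$, the set $\mathcal{S}_i = \{\, j \in \mathcal{J} : \bar v^j_i = \star \,\}$, i.e., the indices of the vectors that are ``hit'' when position $i$ is selected as a non-zero entry of $\bar b$. Then I would take $\mathcal{U} = \mathcal{J}$ and $\mathcal{S} = \{\mathcal{S}_i\}_{i=1}^n$. Feasibility of the resulting cover instance, $\bigcup_i \mathcal{S}_i = \mathcal{U}$, follows since every $\bar v^j$ is non-zero by hypothesis, so each $j \in \mathcal{J}$ lies in some $\mathcal{S}_i$. The construction can be carried out by Algorithm~\ref{buildsets} in $O(n |\mathcal{J}|)$ time by scanning all entries of all vectors once, which establishes polynomial complexity of the reduction.

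The core step is then an equivalence argument between the two optimization problems. Given any $\bar b \in \{0,\star\}^n$, let $\mathcal{I}(\bar b) = \{\, i : \bar b_i = \star \,\}$. I would show two directions: (i) $\bar v^j \cdot \bar b \neq 0$ iff $j \in \mathcal{S}_i$ for some $i \in \mathcal{I}(\bar b)$, i.e., iff $j \in \bigcup_{i \in \mathcal{I}(\bar b)} \mathcal{S}_i$; so $\bar b$ is feasible for \eqref{label3} iff $\mathcal{I}(\bar b)$ is feasible for the set cover instance; (ii) $\|\bar b\|_0 = |\mathcal{I}(\bar b)|$, so optimal objective values and optimal solutions are in one-to-one correspondence via $\bar b \leftrightarrow \mathcal{I}(\bar b)$.

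I do not foresee a serious obstacle here: the lemma is essentially a translation statement, and the natural bijection between the support of $\bar b$ and a cover of $\mathcal{J}$ is forced by the definition of the structural inner product. The only things that require care are (a) confirming that Algorithm~\ref{buildsets} implements exactly the map $i \mapsto \mathcal{S}_i$ described above, and (b) checking that both reductions (optimal-to-optimal and feasible-to-feasible) preserve the objective $\|\cdot\|_0 = |\cdot|$ exactly, so that a minimum cover yields an $\bar b^*$ satisfying \eqref{label3} and vice versa. Once these bookkeeping points are verified, the polynomial reducibility claim is immediate.
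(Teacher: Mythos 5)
Your proposal is correct and follows essentially the same route as the paper's proof: you construct exactly the sets $\mathcal S_i=\{j:\bar v^j_i=\star\}$ that Algorithm~\ref{buildsets} builds, and your chain of equivalences between covers of $\mathcal U=\mathcal J$ and feasible supports of $\bar b$ is the same argument the paper gives. Your $O(n|\mathcal J|)$ complexity bound for the reduction is in fact tighter than the paper's stated $\mathcal O(\max\{|\mathcal J|,n\}^3)$, but both establish the required polynomiality.
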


\begin{algorithm}
\small

\textbf{Input:} $\{\bar v^j\}_{j\in \mathcal J}$, a collection of $|\mathcal J|$  vectors in $\{0,\star\}^n$.

\textbf{Output:} $\mathcal S=\{\mathcal S_i\}_{i\in\{1,\ldots,n\}}$ and $\mathcal U$, a set of $n$ sets and the universe of the sets, respectively.

\begin{algorithmic}[1]

\STATE \textbf{set} $\mathcal S_i=\{\}$ for $i=1,\ldots,n$ 
	
\STATE \textbf{for} $j=1,\ldots,|\mathcal J|$ 
                
               \qquad \textbf{for} $i=1,\ldots,n$ 

				\qquad\qquad \textbf{if} $\bar v_i^j\neq 0$ \textbf{then}
				
				\qquad\qquad\qquad $\mathcal S_i=\mathcal S_i\cup\{j\}$;
				
				\qquad\qquad \textbf{end if}
    
    \qquad \textbf{end for}

 \textbf{end for}

\STATE \textbf{set} $\mathcal S=\{\mathcal S_1,\hdots,\mathcal S_{n}\}$ and  $\mathcal U=\displaystyle\bigcup_{i=1}^{n}\mathcal S_i$.

\end{algorithmic}
\caption{Polynomial reduction of the structural optimization problem~\eqref{mcpStatement} to a set-covering problem}	
\label{buildsets}
\end{algorithm}

Next, we show that given the structure obtained in Lemma~\ref{lemma1}, almost all possible real numerical realizations lead to a vector $b\in\mathbb{R}^n$  that is a solution to the MCP.

\begin{theorem}\label{acharb}
Let $\{v^{i}\}_{i\in\mathcal{J}}$ to be the set of left-eigenvectors of $A$, and $\bar b$ a solution to~\eqref{label3}. Then, almost all numerical realizations $b$ of $\bar{b}$ are solutions to the MCP.
\hfill$\diamond$
\end{theorem}

Observe that Theorem~\ref{acharb} differs from the converse result in Proposition~\ref{probstrucvsmin}  in a subtle, yet important, manner which we describe in the following remark.

\begin{remark}\label{almostAllinputs}
The converse result in Proposition~\ref{probstrucvsmin} about the generic properties that characterize structural controllability shows  that almost all parameters of both dynamics and input matrices satisfying a given structural pattern are controllable. Although, in Theorem~\ref{acharb} the dynamics' simple matrix $A$ is fixed, i.e., a numerical instance with specified structure, and density arguments are provided to the numerical realizations of the input vector with certain structure ensure controllability of the system.\hfill $\diamond$
\end{remark}

Although Theorem~\ref{acharb} ensures that almost all parameterizations provide a feasible solution to the MCP, we need to determine one parameterization that guarantees controllability. Toward this goal, in Algorithm~\ref{procedure}, we present an efficient algorithm to obtain such parameterization. The correctness and computational complexity of the algorithm is provided in the next result.

\begin{theorem}\label{soundness} Algorithm~\ref{procedure} is correct and has complexity $\mathcal O(|\mathcal J|)$, where $|\mathcal J|$ is the size of the collection of vectors given as input to the algorithm.\hfill $\diamond$ \end{theorem}

\begin{algorithm}
	
\textbf{Input:} $\{v^j\}_{j\in \mathcal J}$, a collection of $|\mathcal J|$  complex vectors, and $\bar B\in \{0,\star\}^{n\times m}$.

\textbf{Output:} $B^*\in \mathbb{R}^{n\times m}$ solution to~\eqref{label4}.

\begin{align*}
B^* =\arg\min_{B\in \mathbb{R}^{n\times m}}  &\qquad \qquad 0\\
\text{s.t. } &\qquad {(v^j)}^\dagger B>0, \quad j\in \mathcal J\\
&\qquad  B_{l,k}=0 \text{ if } \bar B_{l,k}=0, \ \ l,k=1,\ldots,n
\end{align*}

\caption{Determines a numerical parametrization of an input matrix $B$ with specified  input structure $\bar B$ \vspace{0.1cm}}
\label{procedure}
\end{algorithm}

Whereas Algorithm~\ref{procedure} provides an efficient formulation that enables to retrieve a possible parametrization ensuring controllability, one can easily extend this framework to more general scenarios aiming to capture some additional control metrics of interest, for instance, the controllability energy. This extensions are described in further detail in the following remark.

\begin{remark}
Suppose the objective function in Algorithm~\ref{procedure} is given by $f(B)$. Then, this  can be chosen to satisfy additional design constraints. For instance, $f(B)=c^\intercal B \mathbf{1}$, where  $c$ could capture an actuation cost, i.e., entry $c_i$ captures how desirable is to actuate $x_i$, and $\mathbf{1}$ is a vector of ones with appropriate dimensions. Subsequently, one may need additional constraints such that the total actuation budget $r$ available is bounded, for instance, $|f(B)|\le r$ and $B_{i,j}\ge 0$ to avoid negative entries that will restrain the objective goal.  Alternatively, $f(B)$ can also be considered to be nonlinear, while capturing control-theoretic properties; in particular, it can be a function of the controllability Grammian~\cite{PasqualettiJournalControl}, with some appropriate constraints to ensure the problem to be well defined. \hfill $\diamond$
\end{remark}

Next, we show that the sparsest vector pattern given by Lemma~\ref{lemma1}, together with Algorithm~\ref{procedure}, leads to a numerical realization that is a solution to the MCP.

\begin{lemma}\label{lemma4}
	Given $\displaystyle\{v^i\}_{i\in\mathcal{J}}$ with $v^i\in\mathbb{C}^n$, the procedure of finding $ b^*\in\mathbb{R}^n$ such that 
	\begin{equation}\label{label4}\begin{array}{rc}b^*=\underset{b\in\mathbb{R}^n}{\arg \min} &\|b\|_0
	\\
	\text{s.t. } & v^i\cdot b\neq 0,\text{ for all }i\in\mathcal{J},
	\end{array}\end{equation}
	is polynomially (in $|\mathcal J|$ and $n$) reducible to a minimum set covering problem (provided by Algorithm~\ref{buildsets}), with numerical entries determined using Algorithm~\ref{procedure}.
	\hfill$\diamond$
\end{lemma}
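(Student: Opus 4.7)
The plan is to leverage Lemma~\ref{lemma1} and Lemma~\ref{soundness} together, bridging them through a simple observation: the numerical condition $v^i\cdot b\neq 0$ structurally implies $\bar v^i\cdot \bar b\neq 0$, so the sparsity of any numerical feasible point is lower bounded by the optimum of the structural problem~\eqref{label3}. Therefore, if we can exhibit a numerical realization attaining that structural lower bound, we are done.

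First, I would obtain the optimal structural pattern. Applying Algorithm~\ref{buildsets} to the structural vectors $\{\bar v^i\}_{i\in\mathcal{J}}$ (where $\bar v^i$ is the $\{0,\star\}$-pattern of $v^i$) produces a universe $\mathcal{U}$ and a collection $\mathcal{S}$ such that, by Lemma~\ref{lemma1}, an optimal solution $\mathcal I^*$ of the associated minimum set covering instance yields the sparsest $\bar b^*\in\{0,\star\}^n$ with $\bar v^i\cdot \bar b^*\neq 0$ for all $i\in\mathcal J$. This part of the reduction is polynomial-time by Lemma~\ref{lemma1}.

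Next, I would instantiate $\bar b^*$ with complex numerical values using Algorithm~\ref{procedure} on input $\{v^i\}_{i\in\mathcal J}$ and $\bar b^*$. By Lemma~\ref{soundness}, Algorithm~\ref{procedure} runs in polynomial time and is correct in the sense that it returns a $b\in\mathbb C^n$ whose zero/non-zero pattern coincides with $\bar b^*$ and which satisfies $v^i\cdot b\neq 0$ for all $i\in\mathcal{J}$. Hence $b$ is feasible for~\eqref{label4} with $\|b\|_0=\|\bar b^*\|_0$.

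It remains to show optimality. For any feasible $b\in\mathbb C^n$ of~\eqref{label4}, the inequalities $v^i\cdot b\neq 0$ force $\bar v^i\cdot \bar b\neq 0$ structurally for all $i\in\mathcal J$, so $\bar b$ is feasible for~\eqref{label3} and therefore $\|b\|_0=\|\bar b\|_0\geq \|\bar b^*\|_0=\|b^*\|_0$ where $b^*$ is the output of the construction above. This matches the structural lower bound and shows that $b^*$ solves~\eqref{label4}. Combined with the polynomial complexity of Algorithm~\ref{buildsets} and Algorithm~\ref{procedure}, this yields the stated polynomial reducibility.

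The main obstacle is not the reduction itself but ensuring that the structural optimum can be numerically realized without inadvertently introducing an orthogonality relation with some $v^i$, or an extra zero entry collapsing the sparsity; this obstruction is exactly what Algorithm~\ref{procedure} and Lemma~\ref{soundness} already handle via the two-phase perturbation argument, so the proof essentially reduces to invoking these results in the right order and citing Theorem~\ref{popovvalvector} if one wishes to highlight the controllability interpretation.
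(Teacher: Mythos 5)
Your proposal is correct and follows essentially the same route as the paper's own proof: invoke Lemma~\ref{lemma1} for the structural reduction, use Algorithm~\ref{procedure} (via Lemma~\ref{soundness}) to realize the optimal pattern numerically, and observe that any feasible $b$ for~\eqref{label4} must satisfy the structural constraints, so its sparsity is lower-bounded by the set-covering optimum. Your write-up merely makes the lower-bound step more explicit than the paper's ``it is readily seen'' remark.
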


Now, we state one of the main results of the paper.

\begin{theorem}
 The solution to the MCP can be determined by first identifying the sparsity of the input vector as in Lemma~\ref{lemma1}, followed by determining the numerical realization of the non-zero entries using Algorithm~\ref{procedure}.
\hfill $\diamond$
\label{exactSolTheorem}
\end{theorem}

Finally, we characterize  the sparsity solutions to the MCP besides those described by a single effective input.

%%%%%%%%%

\begin{theorem}
Let $b\in\mathbb{R}^{n}$ be a solution to the MCP as described in Theorem~\ref{exactSolTheorem}, $\bar b$ its sparsity and $\mathcal N\subset\{1,\ldots,n\}$ the indices where $\bar b$ is non-zero, i.e., $\mathcal N=\{i: \bar b_i=\star, \text{ and }i=1,\ldots,n\}$. If $\bar B \in \{0,\star\}^{n\times n}$ has exactly one non-zero entry in the \mbox{$i$-th} row, where $i\in \mathcal N$, then  the output $B\in \mathbb{R}^{n\times n}$  of Algorithm~\ref{procedure}, when $\bar B$ and the  left-eigenbasis of $A$ are considered, is a solution to the MCP.  \hfill $\diamond$
\label{generalSol2MCP}
\end{theorem}

In particular, from Theorem~\ref{generalSol2MCP}, we obtain the following result regarding the scenario where  every effective input actuates a single state variable, which we refer to as \emph{dedicated} inputs.

\begin{corollary}\label{dedicatedSolMCP}
Let $b\in\mathbb{R}^{n}$ be a solution to the MCP as described in Theorem~\ref{exactSolTheorem}, $\bar b$ its sparsity and $\mathcal N\subset\{1,\ldots,n\}$ the indices where $\bar b$ is non-zero, i.e., $\mathcal N=\{i: \bar b_i=\star, \text{ and }i=1,\ldots,n\}$. If $\bar B \in \{0,\star\}^{n\times n}$ has exactly one non-zero entry in the \mbox{$i$-th} row and each column, where $i\in \mathcal N$, then  the output $B\in \mathbb{R}^{n\times n}$  of Algorithm~\ref{procedure}, when $\bar B$ and the  left-eigenbasis of $A$ are considered, is a dedicated solution to the MCP, i.e., every effective input actuates a single state variable.
\hfill $\diamond$
\end{corollary}

\subsection{On the Exact Solution of the Robust Minimal Controllability Problem}\label{rMCPsec}

Now, we study the rMCP, by first showing that this is an NP-hard problem (Theorem~\ref{NPhardrMCP}). Then,  similarly to the previous subsection, we first show that a particular subclass of input matrices is a solution to this problem. More specifically, we  characterize the dedicated solutions to the rMCP (Theorem~\ref{mainTheorMCP}), and, subsequently, we provide a characterization of the solution to the rMCP in Theorem~\ref{generalSol2rMCP}.

\begin{theorem}\label{rMCPNP}
The rMCP is NP-hard.\hfill $\diamond$
\label{NPhardrMCP}
\end{theorem}

Now, similar to the reduction proposed from MCP to the set covering problem, we can characterize the dedicated solutions to the rMCP by considering a set multi-covering problem as stated in the next result.

\begin{theorem}
Let $v^1,\ldots, v^n$ be a left-eigenbasis of $A$, and $s$ the number of possible input failures. Further, consider the set multi-covering problem $(\{\mathcal S_1,\ldots, \mathcal S_{(s+1)n}\}$, $\mathcal U\equiv \{1,\ldots,n\}; d)$, where the demand is $d(i)=s+1$  for $i\in \mathcal U$, and $\mathcal S_k=\{j: [v^j]_{l}\neq 0, \text{ and } l-1=k\mod n\}$ for $k\in\mathcal K\equiv\{1,\ldots,(s+1)n\}$. Then, the following statements are equivalent:
\begin{enumerate}
\item[(i)] $\mathcal M^*$ is a solution to the set multi-covering problem $(\{\mathcal S_1,\ldots, \mathcal S_{(s+1)n}\},\mathcal U\equiv \{1,\ldots,n\}; d)$;
\item[(ii)]  $B_n(\mathcal M^*)$ is a dedicated solution to rMCP, where $[B_n(\mathcal M^*)]_{i,l}=1$  for  $l=i\mod n$ and $i\in\mathcal M^*\subset \mathcal K$, and zero otherwise.\hfill $\diamond$
\end{enumerate}
\label{mainTheorMCP}
\end{theorem}

\begin{remark}\label{CasoNotavel}
	A matrix $B_n(\mathcal M')$ described by the concatenation of $(s+1)$ solutions to the MCP achieves feasibility to the rMCP, but it is not necessarily an optimal solution to the rMCP. In  Section~\ref{subSecrMCPexample}, we provide an example where the concatenation of solutions is not a solution to the rMCP.\hfill $\diamond$
\end{remark}

In Theorem~\ref{mainTheorMCP}, we provided a characterization of dedicated solutions to the rMCP.  In particular, we notice that the solution may require that several non-zero entries in a row of a dedicated solution are considered. In other words, the same state variable needs to be actuated by different actuators to ensure robustness for $s$ input failures. 

Next, we characterize the solutions of the rMCP, i.e.,  not only the ones that are dedicated.  Towards this goal, we need to introduce the following \emph{merging} procedure. Let two distinct effective inputs $i$ and $j$, associated with two non-zero columns of the input matrix, $b^i$ and $b^j$, be such that they share no non-zero entry $k$, i.e., $[b^i]_k\neq [b^j]_k$ for $k\in\{1,\ldots,n\}$. These two inputs are said to be merged into one input $b^{i'}$, where $[b^{i'}]_k=[b^i]_k$ when $[b^i]_k\neq 0$, and $[b^{i'}]_k=[b^j]_k$ when $[b^j]_k\neq 0$, for $k\in\{1,\ldots,n\}$. Further, it is implicitly assumed that $b^{i'}$ takes the place of $b^i$ and $b^j$ is set to zero. In other words, the effective input $i$ is associated with $b^{i'}$ and the effective input $j$ is discarded.

\begin{theorem}\label{generalSol2rMCP}
Let $B_n(\mathcal M^*)\in\mathbb{R}^{n\times (s+1)n}$ be a dedicated solution to the rMCP as described in Theorem~\ref{mainTheorMCP}. In addition, let $\bar B \in \{0,\star\}^{n\times (s+1)n}$ be the sparsity of the matrix resulting of the merging procedure between any of the effective inputs in $B_n(\mathcal M^*)$. Then, the matrix $B\in \mathbb{R}^{n\times n}$  obtained using Algorithm~\ref{procedure}, with $\bar B$ and the  left-eigenbasis of $A$, is a solution to the~rMCP. 
\hfill$\diamond$
\end{theorem}

\subsection{Computational Complexity}\label{complexitySec}

In the previous subsections, we have mentioned that both MCPs are NP-hard. The NP-hardness assesses that a problem is at least as difficult as another NP-hard problem. In this subsection, we show that both MCPs are NP-complete, i.e., their decision versions are NP-complete. Therefore, we provide an interesting remark about NP-completeness class from results known in control systems. Also, it sets the grounds for the next subsection, where polynomial approximation algorithms (that obtain a suboptimal solution to the set multi-covering problem) are leveraged to obtain approximate solutions to the MCP and rMCP.

\begin{theorem}\label{thmain}
	The MCP  and rMCP are NP-complete.
%\vspace{-0.6cm}
	\hfill$\diamond$
\end{theorem}
\vspace{-0.2cm}

Additionally, Theorem~\ref{thmain}  leads to the following interesting observation.
\begin{remark}\label{NPprobUnlikely}
By Proposition~\ref{probstrucvsmin} (the converse part), it follows that a solution of the MCP almost always coincides with a numerical realization of a solution to an associated minimal structural controllability. 
Combining this with the fact that the MCP is NP-complete when the eigenvalues of $A$ are simple (see Theorem~\ref{thmain}), it follows that the set of  simple dynamics' matrices that lead to NP-complete problems has zero Lebesgue measure. \hfill$\diamond$
\end{remark}

As stated in Theorem~\ref{thmain}, the condition that the  matrices $A$ be restricted to have simple eigenvalues, is, in fact, necessary in a sense for the proposed reduction of the MCP to the minimum  set covering problem to be polynomial in $n$. 
This fact is explored in the next remark.

\begin{remark}
The proposed reduction from the MCP to the minimum set covering problem is polynomial in $\max(|\mathcal{J}|,n)$, where $|\mathcal{J}|$ denotes  the number of left-eigenvectors. Nevertheless, because the number of left-eigenvectors can grow exponentially, it follows that the proposed reduction cannot be used to show that the decision version of the  (general) MCP is NP-complete.  However, this does not imply  that the decision version of the  MCP for arbitrary dynamics'  matrices (i.e., when $A$ is not restricted to have simple eigenvalues)   is not NP-complete, which remains an open question.\hfill $\diamond$
\end{remark}

Finally, we notice that the fact that a problem is NP-hard, it does not mean that all instances are not solved polynomially; notwithstanding, these can be solved exactly~\cite{exactAlgSetMultiCover,DPAlgSetMultiCover}. Furthermore, the NP-completeness stated in Theorem~\ref{thmain}, allows us to consider the subclasses of the set multi-covering problem that are known to be polynomially solvable, to identify polynomially solvable subclasses of the MCPs. This enables a new characterization of solutions to the question posed in~\cite{controlLaplac}, regarding the existence of polynomial algorithms exist to determine controllable graph structures. In particular, we notice that  in several of these cases, the graphs are associated with dynamics' matrices that are simple -- the case explored in this present paper. Alternatively,  by the proposed construction, if the set multi-covering problem obtained possess additional structure, then this can be leveraged to use polynomial algorithms to approximate the solutions with close-to-optimal solutions, as we discuss in the next subsection.

\subsection{Polynomial Approximations to the Solution of the Minimal Controllability Problems}\label{approxMCPsSec}

As a consequence of Theorem~\ref{thmain}, it follows that we can obtain polynomial approximations for both the multi-set covering problem and the rMCP. Notice that, in particular, a solution to the MCP can be obtained by considering that no input fails. Therefore,  in Algorithm~\ref{algorithm3},  we propose an algorithm that leverages the submodularity properties~\cite{BachSubmodular} of the set multi-covering properties to obtain a dedicated solution to the rMCP.   Submodularity properties ensure that the associated polynomial greedy algorithms have sub-optimality guarantees while performing well in practice~\cite{BachSubmodular}, see also Remark~\ref{remark6}. Subsequently, following a similar reasoning to that presented in Theorem~\ref{mainTheorMCP}, we can obtain the following result.

\begin{algorithm}
\small

\textbf{Input:} Left-eigenbasis $v^1,\ldots,v^n$ associated with $A\in\mathbb{R}^{n\times n}$ and the number of possible input failures $s$.

\textbf{Output:} Dedicated solution $B_n(\mathcal M')\in\mathbb{R}^{n\times (s+1)n}$.

\begin{algorithmic}[1]

\STATE Let $\mathcal S_1,\ldots,\mathcal S_{(s+1)n}$,  where  $\mathcal S_k=\{j: [v^j]_{l}\neq 0, \text{ and } l-1=k\mod n\}$ for $k\in\mathcal K\equiv\{1,\ldots,n(s+1)\}$.

\STATE \textbf{set} $\mathcal U^i=\emptyset$, with $i=1,\ldots, s$ $\triangleright$ denote the indices in $\mathcal U$ that are covered $i$ times and the indices of the sets covering them, respectively. 

\STATE \textbf{set} $\mathcal J=\emptyset$

\STATE \textbf{for} $i=1,\ldots,s+1$ 

	\qquad \textbf{set} $\mathcal U^i=\{k: |\{k\in\mathcal U: k\in \mathcal S_j, j\in \mathcal J\}|\ge i  \}$ $\triangleright$ the indices that are already covered by at least $i$ sets

\STATE\qquad\textbf{while} $\mathcal U^i\neq \mathcal U$

\qquad\qquad \textbf{select} $\mathcal S_j$ with largest number of indices in $\mathcal U\setminus \mathcal U^i$

 \qquad\qquad \textbf{set}  $\mathcal J\leftarrow\mathcal J \cup \{j\}$

\qquad\qquad \textbf{set} $\mathcal U^i\leftarrow \mathcal U^i\cup \mathcal S_j$
    
    \qquad \textbf{end while}

%	\qquad \textbf{set} $\mathcal J\leftarrow  \mathcal J\cup \mathcal J^j$;

 \textbf{end for}

\textbf{set} $\mathcal M'\gets\mathcal J$;
\end{algorithmic}
\caption{Approximate Solution to the rMCP}
\label{algorithm3}
\end{algorithm}

\begin{theorem}\label{complexityAlg3}
	The matrix $B_n(\mathcal M')$ obtained using Algorithm~\ref{algorithm3}, with $\bar B$ and the  left-eigenbasis of $A$,  is a feasible solution to the rMCP.  Further, the computational complexity of Algorithm~\ref{algorithm3} is  $\mathcal O(s  n)$, and it ensures an approximation optimality bound of $\mathcal O(\log{n})$.
\hfill$\diamond$  		
\end{theorem}

\begin{remark}\label{remark6}
	 Algorithm~\ref{algorithm3} produces suboptimal solutions that are often optimal solutions to the rMCP, as illustrated in Section~\ref{subsecAillustrativeExample}. The practical performance, together with the linear computational complexity motivated the choice of such procedure.   
Nonetheless, the information on the structure of the left-eigenvectors, or equivalently, the structure of the sets in the  set multi-covering problem,  can be leveraged to obtain better approximations, for instance, see~\cite{HochbaumApproximationSC,Bronnimann1995}. In particular, the approximation algorithm from~\cite{Bronnimann1995} outperforms the majority of the known approximation algorithms if the number of elements of the largest set is small. The authors obtained an approximation optimality bound of $\mathcal O(d\log{d c})$, where $c$ is the size of an optimal solution and $d$ the  number of elements of the largest set, and its computational complexity is $\mathcal O(c n\log{\frac{n}{c}})$. 
Further, ~\cite{Guy} extends the latter results by using a linear programming relaxation, which has comparable computational complexity, but with a better approximation ratio that is smaller by a constant factor. Also, in~\cite{Guy} the approach is directly applicable to set multi-covering problems, required to determine the solution to the rMCP.
\hfill$\diamond$  
\end{remark}

Finally, by invoking Theorem~\ref{generalSol2rMCP}, we obtain the following result.

\begin{corollary}\label{generalSol3rMCP}
Let $B_n(\mathcal M')\in\mathbb{R}^{n\times (s+1)n}$ be a dedicated solution to the rMCP as described in Theorem~\ref{complexityAlg3}. In addition, let $\bar B \in \{0,\star\}^{n\times (s+1)n}$ be the sparsity of the matrix resulting of the merging procedure between any of the effective inputs in $B_n(\mathcal M')$. Then, the matrix $B\in \mathbb{R}^{n\times n}$ obtained using Algorithm~\ref{procedure}, with $\bar B$ and the  left-eigenbasis of $A$, achieves feasibility to the rMCP and is computed in polynomial time.
\hfill$\diamond$
\end{corollary}

\subsection{Numerical and Computational Remarks}\label{numericalAnalysis}

 Now, for the sake of completeness, we discuss the implications of waiving Assumption~2 and the impact on  the input vector in the MCP. The results trivially extend to the general solution to the MCPs. Towards this goal, we need the following result from \cite{Pan:1999:CME:301250.301389}.

\begin{theorem}[\hspace{-0.01cm}\cite{Pan:1999:CME:301250.301389}]\label{eigen}
Let $A\in \mathbb{C}^{n\times n}$ be a matrix with simple eigenvalues. The deterministic arithmetic complexity of finding the eigenvalues and the eigenvectors of $A$ is bounded by $\mathcal{O}\left(n^3\right)+t\left(n,m\right)$ operations, where $t(n,m)=\mathcal{O}\left(\left(n\log^2n\right)\left(\log m+\log^2 n\right)\right)$, for a required upper bound of $2^{-m}\|A\|$ on the absolute output error of the approximation of the eigenvalues and eigenvectors of $A$ and for any fixed matrix norm $\|\cdot\|$.\hfill$\diamond$
\end{theorem}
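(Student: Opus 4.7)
The plan is to split the eigenproblem for a matrix $A$ with simple spectrum into three phases, each admitting a well-controlled complexity analysis: (i) an orthogonal reduction of $A$ to a structured form on which its characteristic polynomial is cheap to manipulate, (ii) a fast high-precision root finder that extracts all $n$ eigenvalues, and (iii) a single linear solve per eigenvalue to recover the corresponding eigenvector. The $\mathcal{O}(n^3)$ summand will come entirely from phases (i) and (iii), while phase (ii) will contribute the $t(n,m)$ term.

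For phase (i), I would perform the standard Householder reduction $H = Q^{\ast} A Q$ to upper Hessenberg form; this uses $n-2$ Householder reflections and costs $\mathcal{O}(n^3)$ arithmetic operations, while preserving the spectrum exactly. From $H$ one can form the characteristic polynomial $p(\lambda) = \det(\lambda I - H)$ via, e.g., Hyman's scheme or evaluation at Chebyshev nodes followed by fast interpolation, again in $\mathcal{O}(n^3)$. The simplicity hypothesis on $A$ carries over to $p$: all $n$ roots are distinct and $p'(\lambda_i) \ne 0$, which is what makes the next phase well-conditioned.

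Phase (ii) is where the main obstacle sits. The task is to approximate all roots of a degree-$n$ polynomial to additive accuracy $2^{-m}\|A\|$ within the claimed bound $\mathcal{O}\bigl((n \log^2 n)(\log m + \log^2 n)\bigr)$. I would invoke Pan-style divide-and-conquer factorization: locate approximate root clusters, split $p$ via approximate factorizations computed with FFT-based polynomial multiplication and division (each of cost $\mathcal{O}(n \log n)$), then refine by Newton or Graeffe iteration. Quadratic convergence of the refinement gives the $\log m$ factor, while the recursion depth and polynomial arithmetic contribute the $\log^2 n$ factor. This is precisely the step where I would lean most heavily on Pan's root-finding machinery; producing a self-contained complexity argument here is the true technical load of the theorem.

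For phase (iii), given each approximate simple eigenvalue $\tilde\lambda_i$, the associated eigenvector of $H$ is the nonzero vector in the kernel of $H - \tilde\lambda_i I$; because $H$ is Hessenberg and $\tilde\lambda_i$ is simple, this can be obtained by one $\mathcal{O}(n^2)$ back-substitution (or one step of inverse iteration), giving $\mathcal{O}(n^3)$ for all $n$ eigenvectors. A final multiplication by $Q$ brings them to eigenvectors of $A$ within the same budget, and a Bauer--Fike type perturbation estimate converts the $2^{-m}\|A\|$ accuracy on $\tilde\lambda_i$ into a comparable accuracy on the computed eigenvector (absorbing at worst a $\mathrm{poly}(n)$ factor into $m$, which does not affect the asymptotic bound). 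Summing the three phases yields $\mathcal{O}(n^3) + t(n,m)$, as claimed.
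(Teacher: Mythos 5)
This statement is not proved in the paper at all: it is quoted verbatim from the cited reference \cite{Pan:1999:CME:301250.301389} and used as a black box in Section~V.B, so there is no in-paper argument to compare your proposal against. Judged on its own terms, your three-phase outline (Hessenberg reduction, root-finding on the characteristic polynomial, inverse iteration for eigenvectors) is the standard architecture behind results of this type and correctly isolates where each summand of the bound comes from. However, it does not constitute an independent proof, because phase (ii) --- approximating all $n$ zeros of a degree-$n$ polynomial to the stated accuracy in $\mathcal{O}\bigl((n\log^2 n)(\log m+\log^2 n)\bigr)$ operations --- \emph{is} the main theorem of the work being cited; ``invoke Pan-style divide-and-conquer factorization'' defers exactly the content that would need to be established. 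You acknowledge this, which is honest, but it means the proposal proves only the routine $\mathcal{O}(n^3)$ envelope.

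One further point deserves more care than your ``absorb a $\mathrm{poly}(n)$ factor into $m$'' remark. Passing through the characteristic polynomial is not conditioning-neutral: the map from coefficients to roots can be far worse conditioned than the map from matrix entries to eigenvalues, so guaranteeing output error $2^{-m}\|A\|$ on the eigenvalues requires computing the roots of $p$ to a precision that may exceed $m$ by substantially more than a polynomial additive term. In the arithmetic-complexity model the damage is partially contained because $t(n,m)$ depends only on $\log m$ --- replacing $m$ by $m+\mathrm{poly}(n)$, or even $2^{\mathcal{O}(n)}m$, keeps the total within $\mathcal{O}(n^3)+t(n,m)$ --- but this is precisely the precision-management analysis that Pan's paper carries out and that your sketch elides. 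Similarly, the claim that Bauer--Fike converts eigenvalue accuracy into eigenvector accuracy needs the simplicity hypothesis quantitatively (a lower bound on the eigenvalue gaps), not just qualitatively; for nearly defective $A$ the eigenvector error can be amplified by the reciprocal of the gap. None of this invalidates your outline as a roadmap, but these are the places where a self-contained proof would have to do real work.
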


More precisely, Theorem~\ref{eigen} states that in practice, only a numerical approximation of the left-eigenbasis is possible in polynomial time. In this case, let $\epsilon=2^{-m}\|A\|$ be as in Theorem~\ref{eigen}, then the results stated in Lemma~\ref{lemma1} and Lemma~\ref{lemma4} (see also Algorithm~\ref{buildsets} and Algorithm~\ref{procedure}) can only be used in an $\epsilon$-\emph{approximation} of the left-eigenbasis of the dynamics' matrix. Therefore, the  $\epsilon$-approximation of the left-eigenbasis may lead to the following issues: 

 (i) an entry in the left-eigenvectors is considered as zero, where in fact it can be some non-zero value that (in norm) is smaller then $\epsilon$. Consequently, the sets generated using Algorithm~\ref{buildsets} (see also Lemma~\ref{lemma1}) do not contain the indices associated with those non-zero entries. Thus, additional sets need to be considered to the minimum set covering, which implies that the structure of the input vector may contain more non-zero entries than the sparsest input vector that is a solution to the MCP. In other words, we obtain an over-approximation of the sparsest input vector that is a solution to the MCP.

 (ii)  an entry in the left-eigenvectors in the \mbox{$\epsilon$-\emph{approximation}} of the left-eigenbasis is non-zero. Then, it does not represent an issue when computing the structure of the input vector as described in Lemma~\ref{lemma1} (see also Algorithm~\ref{buildsets}), but it can represent a problem when determining the numerical realization by resorting to Algorithm~\ref{procedure}. Nonetheless, by Theorem~\ref{acharb} it follows that such issue is unlikely to occur.

To undertake a deeper understanding of which entries fall in the first issue presented above, several methods to compute eigenvectors can be used and solutions posteriorly compared, see~\cite{eigenvectorsBOOK} for a survey of the different methods and computational issues associated with those.

\section{Illustrative examples}\label{secCompStrucvsNonStruct}

In this section, we provide some examples that illustrate the main results of the paper.

\subsection{Minimal Controllability Problem}\label{subsecAillustrativeExample}

To illustrate the first main result of this paper, i.e., to determine  a solution to $\mathcal P_1$, consider the dynamics' matrix $A$ given by
\begin{equation}\label{expa} A=
	\left[
	\begin{array}{ccccc}
	 6 & -3 & 3 & 2 & -1 \\
	 0 & 8 & 0 & 0 & 0 \\
	 4 & 3 & 7 & 2 & 1 \\
	 0 & 0 & 0 & 6 & 0 \\
	 -4 & -3 & -3 & -2 & 3 \\
	\end{array}
	\right]
,\end{equation}
where $\sigma(A)=\{2,4,6,8,10\}$ consists of distinct eigenvalues, so the matrix $A$ is simple and  the results in Section~\ref{exactSolSection} are applicable. Consequently, to obtain the solution to the MCP, we first   compute the  left-eigenvectors of $A$ that  are as follows: $ v^1=[\begin{array}{ccccc}1&1&0&0&1\end{array}]^\intercal$, $ v^2=[\begin{array}{ccccc}0&0&1&0&1\end{array}]^\intercal$, $ v^3=[\begin{array}{ccccc}0&0&0&1&0\end{array}]^\intercal$, $ v^4=[\begin{array}{ccccc}0&1&0&0&0\end{array}]^\intercal$ and $ v^5=[\begin{array}{ccccc}1&0&1&1&0\end{array}]^\intercal$. Therefore, their structures are given by {$\bar v^1=[\begin{array}{ccccc}\star&\star&0&0&\star\end{array}]^\intercal$, $\bar v^2=[\begin{array}{ccccc}0&0&\star&0&\star\end{array}]^\intercal$, $\bar v^3=[\begin{array}{ccccc}0&0&0&\star&0\end{array}]^\intercal$, $\bar v^4=[\begin{array}{ccccc}0&\star&0&0&0\end{array}]^\intercal$ and $\bar v^5=[\begin{array}{ccccc}\star&0&\star&\star&0\end{array}]^\intercal$}.
Using Algorithm~\ref{buildsets}, since $\bar v_i$ for $i=1,\hdots,5$, we obtain $\{\mathcal S_j\}_{j=1,\ldots,5}$, where the $j$-th set corresponds to the set of indices of the left-eigenvector which have a non-zero entry on the $j$-th position. In particular,  we obtain
	$\mathcal S_1  =  \left\{1,5\right\}$, 
	$\mathcal S_2  =  \left\{1,4\right\}$, 
	$\mathcal S_3  =  \left\{2,5\right\}$, 
	$\mathcal S_4  =  \left\{3,5\right\}$,  
	$\mathcal S_5  =  \left\{1,2\right\}$, and the universe set is given by
$\mathcal{U}=\displaystyle\bigcup_{i=1}^{n}\mathcal S_i=\left\{1,2,3,4,5\right\}.$
Now, it is easy to see that a solution to this minimum set covering problem is the set of indices $\mathcal I^*=\left\{2,3,4\right\}$, since $\mathcal U = \mathcal S_2\cup \mathcal S_3\cup \mathcal  S_4$ and there is no pair of sets, i.e.,  $\mathcal I'=\{i,i'\}$ with $i,i'\in\{1,\ldots,5\}$ such that $\mathcal U=\mathcal S_i\cup \mathcal S_{i'}$. Therefore, a possible structure of the vector $b$ that is a  solution to the MCP is
\begin{equation}\label{bbareq}\bar b=[\begin{array}{ccccc} 0 & \star &  \star & \star & 0\end{array}]^\intercal.\end{equation}
Additionally, to find the numerical parametrization of $b$, under the sparsity pattern of $\bar b$, we have to solve the following system with three unknowns: $b_2,b_3,b_4\neq0$ and $b_3+b_4\neq0$. By inspection, a possible choice is $b=[\begin{array}{ccccc} 0 & 1 &  1 & 1 & 0\end{array}]^\intercal$, but the numerical parametrization can be obtained by invoking Algorithm~\ref{procedure}, with the set of left-eigenvectors of $A$ given by $\{ v^j\}_{j\in\{1,\ldots,5\}}$ and the structure of $b$ given by $\bar b$ in \eqref{bbareq}. For the sake of completeness, we, the  controllability matrix is given by $$
\begin{array}{rcl}\mathcal C&=&[\begin{array}{ccccc} b & Ab &  A^2b & A^3b & A^4b\end{array}]\\[0.2cm] &=&\left[
\begin{array}{ccccc}
 0 & 2 & 44 & 608 & 7184 \\
 1 & 8 & 64 & 512 & 4096 \\
 1 & 12 & 120 & 1176 & 11520 \\
 1 & 6 & 36 & 216 & 1296 \\
 0 & -8 & -104 & -1112 & -11264 \\
\end{array}
\right],\end{array}
$$
and the rank$(\mathcal C)=5$, which implies that  $(A,b)$ is controllable. 

Observe that the single-input solution obtained with $b=[\begin{array}{ccccc} 0 & 1 &  1 & 1 & 0\end{array}]^\intercal$, can be immediately translated into a solution with two effective inputs, by invoking Theorem~\ref{generalSol2MCP}. In particular, two possible solutions are  $B=[\begin{array}{cc} b^1 & b^2\end{array}]$ with $b^1=[\begin{array}{ccccc} 0 & 1 &  1 & 0 & 0\end{array}]^\intercal$ and $b^2=[\begin{array}{ccccc} 0 & 0 &  0 & 1 & 0\end{array}]^\intercal$, and $B=[\begin{array}{ccc} b^1 & b^2 & b^3\end{array}]$ with $b^1=[\begin{array}{ccccc} 0 & 1 & 0 & 0 & 0\end{array}]^\intercal$, $b^2=[\begin{array}{ccccc} 0 & 0 & 1 & 0 & 0\end{array}]^\intercal$ and $b^3=[\begin{array}{ccccc} 0 & 0 &  0 & 1 & 0\end{array}]^\intercal$, where the latter is a dedicated solution. Alternatively,   if we consider for instance $B=[\begin{array}{cc} b^1 & b^2\end{array}]$ with $b^1=[\begin{array}{ccccc} 0 & 1 & 0 & 0 & 0\end{array}]^\intercal$ and $b^2=[\begin{array}{ccccc} 0 & 0 & -1 & 1 & 0\end{array}]^\intercal$, then $v^\intercal B=0$ for the left-eigenvector $v=[\begin{array}{ccccc}1&0&1&1&0\end{array}]^\intercal$ which renders the pair $(A,B)$ uncontrollable. Thus, as prescribed in Theorem~\ref{generalSol2MCP}, by invoking Algorithm~\ref{procedure}, one can obtain a new realization of $B$  that ensures controllability of $(A,B)$; for instance, the same $b^1$ and  $b^2=[\begin{array}{ccccc} 0 & 0 & \frac{12}{10} & 1 & 0\end{array}]^\intercal$.

In Section~\ref{approxMCPsSec}, a systematic polynomial approximation to the MCP can be obtained by considering the rMCP with the number of input failures equal to $s=0$. In fact, by doing so, one obtains the same sparsity to $b$, i.e., $\bar b$, as in the aforementioned example, and the subsequent analysis follows. Furthermore, we notice that the approximate solution is a solution to the MCP.

\subsection{Minimal Structural Controllability Problem}\label{sec:MSCP}

The solution to the MSCP considering $\bar{A}$ associated with $A$ in (7) is given by $\bar b'=[0 \ \star \ 0 \ \star \ 0]^\intercal$, see~\cite{PequitoJournal} for details.  Therefore, the structural controllability solution to the MSCP provides a strict lower bound on the number of state variables we should actuate with the input, i.e., the sparsity of the input vector (in accordance to Proposition~\ref{probstrucvsmin}). More precisely, we achieve structural controllability by actuating two variables (specifically $x_2$ and $x_4$), but in order to ensure controllability an additional state variable needs to be actuated, for instance, $x_3$ as obtained in Section~\ref{subsecAillustrativeExample}.  Therefore, structural controllability is necessary, but not sufficient,  to achieve controllability even when the matrix $A$ is simple. In particular, considering the converse part of Proposition~\ref{probstrucvsmin}, we note that the numerical values of the matrix $A$ fall into the set of zero Lebesgue measure (see also Proposition~\ref{zeroLebStruct}), where the  solution associated with the MSCP does not provide a solution to the~MCP. As a consequence, notice that Theorem~\ref{acharb} is different and stronger than Proposition~\ref{probstrucvsmin} (as observed in Remark~\ref{almostAllinputs}). More specifically, in Theorem~\ref{acharb} the matrix $A$ has fixed values and only the non-zero entries of $B$ are taken into account, whereas in Proposition~\ref{probstrucvsmin} both non-zero entries of $A$ and $B$ are considered not to be fixed.

 To sharpen the intuition behind these results and observations, we perturbed the matrix $A$ by adding a random uniform noise on the interval $[-10^{-10},10^{-10}]$ to each of its non-zero entries, which leads to a new matrix that we denote by  $A'$ (with the same structure as $A$). The structure of the left-eigenvector of the matrix $A'$ now becomes: $ \bar v^{\prime 1}=[\begin{array}{ccccc}\star& \star& \star& \star& \star\end{array}]^\intercal$, $ \bar v^{\prime 2}=[\begin{array}{ccccc}\star& \star& \star& \star& \star\end{array}]^\intercal$, $ \bar v^{\prime 3}=[\begin{array}{ccccc}0& 0& 0& \star& 0\end{array}]^\intercal$, $ \bar v^{\prime 4}=[\begin{array}{ccccc}0& \star& 0& 0& 0\end{array}]^\intercal$ and $ \bar v^{\prime 5}=[\begin{array}{ccccc}\star& \star& \star& \star& \star\end{array}]^\intercal$. Subsequently, building the sets for the minimum set covering problem as in Algorithm \ref{buildsets}, based on $\bar v_i'$ with $i=1,\hdots,5$,  we obtain  
	$\mathcal S_1'  =  \left\{1,2,5\right\}$, 
	$\mathcal S_2 ' =  \left\{1,2,4,5\right\}$, 
	$\mathcal S_3  '=  \left\{1,2,5\right\}$, 
	$\mathcal S_4  '=  \left\{1,2,3,5\right\}$ and 
	$\mathcal S_5  '=  \left\{1,2,5\right\}$,
and the universe of the minimum set covering problem is $\mathcal U=\{1,2,3,4,5\}$.
Finally, by inspection, we can see that a solution of this minimum set covering problem is the set of indices $\mathcal I'^*=\left\{2,4\right\}$. Hence, the sparsity of the  solution to the MCP coincides with  the solution to the MSCP associated with $\bar{A}$. Lastly, we observe that this example illustrates the conclusions of Proposition~\ref{zeroLebStruct} and Proposition~\ref{probstrucvsmin}.

\subsection{Robust Minimal Controllability Problem}\label{subSecrMCPexample}

Now, we illustrate how to find a solution to $\mathcal P_2$. Let us apply the developments of Section~\ref{rMCPsec}, when we consider the  dynamics' matrix in~\eqref{expa}.  First, if we consider that at most one input fails,  we use Algorithm~\ref{buildsets}, where a  set multi-covering problem  is considered with the sets  as in  Section~\ref{sec:MSCP},  universe $\mathcal U=\{1,\hdots,5\}$ and with a demand function $d(i)=2$ for $i=1,\ldots,5$, i.e., each element must be covered twice. Subsequently, by inspection, we conclude that the sets $\mathcal S_2$ and $\mathcal S_4$ need to be considered twice, since the elements $5$ and $4$ only appear in these sets, respectively. After this, we need to cover the element $2$ and to this end we can choose $\mathcal S_3$ or $\mathcal S_5$ or twice one of them, so a possible solution to the multi-set covering problem is $\mathcal M^\ast=\{2,3,4,2,3,4\}$. Therefore, $B_n(\mathcal M^\ast)$ is a solution to the rMCP, and, in particular,  the  solution is the same as concatenating twice a dedicated solution to the MCP, see Remark~\ref{CasoNotavel}. Further,  Algorithm~\ref{algorithm3} produces an optimal solution as often occurs in practice (Remark~\ref{remark6}).

In fact, if we apply the developments of Section~\ref{rMCPsec} when  $s$ inputs are allowed to fail, i.e., for demand function $d(i)=s+1$ for $i=1,\ldots,5$, we notice that the sets $\mathcal S_2$ and $\mathcal S_4$ need to be considered  $s+1$ times since the elements $5$ and $4$ only appear in these sets, respectively. Besides, we need to cover the element $2$, so we can choose either $\mathcal S_3$ or $\mathcal S_5$ $s+1$ times, which implies that $B(\mathcal M^\ast)$, with $\mathcal M^\ast=\{2,3,4,\hdots,2,3,4\}$ where the elements $2,3$ and $4$ appear $s+1$ times,  is a solution. Similarly, the solution consists of concatenating $s+1$ times a dedicated solution to the MCP, and the same remarks are applicable, i.e.,  Remark~\ref{CasoNotavel} and Remark~\ref{remark6}.

Notwithstanding, the concatenation of $s+1$ solutions to the MCP is not always a solution to the rMCP when at most $s$ inputs are allowed to fail. Let us consider the dynamics' matrix and associated left-eigenvectors as follows:
\begin{equation}\label{exp2a} A=\left[
\begin{array}{ccc}
 4 &\ -2 & \ 2 \\
 -1 &\ 3 & \ 1 \\
 1 & \ -1 & \ 5 \\
\end{array}\quad 
\right]\text{ and } \quad V=\left[
\begin{array}{ccc}
 | & | & | \\
 v^1 & v^2 & v^3 \\
 | & | & | \\
\end{array}
\right]=\left[
\begin{array}{ccc}
 1 & 0 & 1 \\
 1 & 1 & 0 \\
 0 & 1 & 1 \\
\end{array}
\right].\end{equation}

First, we note that $\sigma(A)=\{2,4,6\}$, so $A$ is simple, and we can apply the results in Section~\ref{rMCPsec}. Secondly, the structure of the left-eigenvectors of $A$ is given by $\bar v^1=[\begin{array}{ccc}\star&\star&0\end{array}]^\intercal$, $\bar v^2=[\begin{array}{ccc}0&\star&\star\end{array}]^\intercal$ and $\bar v^3=[\begin{array}{ccccc}\star&0&\star\end{array}]^\intercal$. Further,  we consider that  at most one input failure is likely to occur, i.e., $s=1$. Then, we can invoke Algorithm~\ref{buildsets} to build the sets  for the set multi-covering problem, which are as follows: $\mathcal S=\{\mathcal S_1,\mathcal S_2,\mathcal S_3\}$, with $\mathcal S_1=\{1,2\}$, $\mathcal S_2=\{2,3\}$ and $\mathcal S_3=\{1,3\}$, and $\mathcal U=\bigcup_{i=1}^3\mathcal S_i=\{1,2,3\}$. By inspection, we obtain that $\mathcal M'=\{1,2,3\}$ is the optimal solution, where the  indices cover each element of $\mathcal U$ twice. Further, observe that a solution to the dedicated input MCP always has  size equal to two, and in this case, the concatenation of two solutions lead to a solution that has one more input than the optimal solution obtained. Observe that this is a small dimensional example that incurs into a solution that is already $33\%$ worst than the optimal. Alternatively,  if we apply Algorithm~\ref{algorithm3} to approximate the solution to the rMCP, we obtain one that is optimal, i.e., $B(\mathcal M')$ where $\mathcal M'=\{1,2,3\}$, which is consistent with Remark~\ref{remark6}.

\section{Conclusions and Further Research}\label{conclusions}

In this paper, we addressed two minimal controllability problems, with the goal of characterizing the input configurations that actuate the  minimal subset of variables yielding controllability, under a specified number of failures. The problems explored were shown to be NP-complete, and a polynomial reduction of these to a set multi-covering problem was provided. In particular, the strategies followed by us separate the discrete and continues nature of the minimal controllability problems. Subsequently, we discussed  greedy solutions to the  minimal controllability problems that  yields feasible (but sub-optimal) solutions to rMCP. 

A possible, and interesting, direction for future research in this line of work includes the use of the obtained inputs' structure and consider methods such as coordinate gradient descent to minimize an energy cost, and to consider the case where the model is not exactly known.

%\small
\bibliographystyle{IEEEtran}
\bibliography{IEEEabrv,automatica13}

\appendix 

\paragraph*{Proof of Lemma~\ref{lemma1}}
Consider the sets $\mathcal S$ and $\mathcal U$ obtained in Algorithm~\ref{buildsets}.
The following equivalences hold: let $\mathcal{I}\subset\{1,\cdots,n\}$ be a set of indices and $\bar{b}_{\mathcal{I}}$ the structural vector whose $i$-th component is non-zero if and only if $i\in\mathcal{I}$. 
Then, the collection of sets $\{\mathcal{S}_{i}\}_{i\in\mathcal{I}}$ in $\mathcal S$ covers $\mathcal U$ if and only if
		$\forall j\in\mathcal J,~\exists k\in\mathcal{I}\text{ such that }\,j\in \mathcal S_k,$
		which is the same as
		$\forall j\in\mathcal J,~\exists k\in\mathcal{I} \text{ such that }\,\bar v^j_k\neq 0\text{ and }\,\bar b_k\neq 0\,$,
		this can be rewritten as 
		$\forall j\in\mathcal J,~\exists k\in\mathcal{I} \text{ such that } \bar v^j_k\bar b_k\neq 0$
		and therefore
		$\forall j\in\mathcal J\quad \bar v^j\cdot \bar b\neq 0.$
In summary, $\bar{b}_{\mathcal{I}}$ is a feasible solution to the problem in~\eqref{label3}. In addition, it can be seen  that by such reduction, the optimal solution $\bar{b}^\ast$ of~\eqref{label3} corresponds to the structural vector $\bar{b}_{\mathcal{I}^{\ast}}$, where $\{\mathcal{S}_{i}\}_{i\in\mathcal{I}^{\ast}}$ is the minimal collection of sets that cover $\mathcal{U}$, i.e., $\mathcal{I}^{\ast}$ solves the minimum set covering problem associated with $\mathcal{S}$ and $\mathcal{U}$.
 Hence, the result follows by observing that Algorithm~\ref{buildsets} has polynomial complexity, namely  $\mathcal O(\max\{|\mathcal J|,n\}^3)$.$\hfill\blacksquare$

\paragraph*{Proof of Theorem~\ref{acharb}} The proof follows by showing that if $\{v^i\}_{i\in\mathcal J}$  with countable $\mathcal J$ such that $v^{i}\neq 0$ for all $i\in\mathcal{J}$ and $\bar b$ a solution to~\eqref{label3}, then the set 
$
\Omega=\{b\in\mathbb{R}^n \ :  \ v^i \cdot b=0 \text{ for some } i\in\mathcal J, \text{ and } b \text{ is
a numerical instance of } \bar b \}
$
has zero Lebesgue measure. The proof  follows similar steps to those proposed in~\cite{Wonham}, but due to the additional sparsity constraint we devise an independent proof.  Let $\{v^{i}\}_{i\in\mathcal{J}}$, with countable $\mathcal{J}$, be given and let $\bar{b}$ be a solution to problem \eqref{label4}. For $b\in\mathbb{R}^{n}$,  the equation $v^i\cdot b=0$ represents a hyperplane $\mathcal{H}^i\subset\mathbb{C}^n$ (provided  $v^{i}\neq 0$ for all $i$), thus the equation $v^i\cdot b\neq0$ defines the space $\mathbb{C}^n\setminus \mathcal{H}^i$. Therefore, the set of $b$ that satisfies $v^i\cdot b\neq 0$ for all $i\in \mathcal J$, is given by
$\bigcap\limits_{i\in\mathcal J}\left (\mathbb{C}^n\setminus\mathcal{H}^i\right)=\mathbb{C}^n\setminus\left(\bigcup\limits_{i\in\mathcal J}\mathcal{H}^i\right)$
 and the set $\Omega$ of values which does not verify the equations is the complement, i.e.,
$\left(\mathbb{C}^n\setminus\bigcup\limits_{i\in\mathcal J}\mathcal{H}^i\right)^\mathsf{c}=\bigcup\limits_{i\in\mathcal J}\mathcal{H}^i,$
	which is a set with zero Lebesgue measure in $\mathbb{C}^n$, since $|\mathcal J|$ is countable.

Now, if $\{v^{i}\}_{i\in\mathcal{J}}$ is taken to be the set of left-eigenvectors of $A$ and $\bar{b}$ the corresponding solution to problem~\eqref{label4}, each member of the set $\Omega$ constitutes a solution to~\eqref{label4} and hence the MCP. Since, by the preceding arguments, $\Omega$ has Lebesgue measure zero in $\mathbb{C}^{n}$, it follows readily that almost all numerical instances of $\bar{b}$ are solutions to the MCP.$\hfill\blacksquare$

\paragraph*{Proof of Theorem~\ref{soundness}}
The existence of a solution is granted by Theorem~\ref{acharb}, and from~\cite{Megiddo84} one obtains the complexity for linear programming proposed.$\hfill\blacksquare$

\paragraph*{Proof of Lemma~\ref{lemma4}}
By Lemma \ref{lemma1}, given $\{\bar{v}^{i}\}_{i\in\mathcal{J}}$, problem \eqref{label4} is polynomially (in $|\mathcal{J}|$ and $n$) reducible to a  minimum set covering problem. Now, given a solution $\bar{b}$ to~\eqref{label3},   Algorithm~\ref{procedure} can be used to obtain a numerical instantiation  $b$ with the same structure as $\bar b$ such that $v^i\cdot b\neq 0$ for all $i\in\mathcal J$, which incurs polynomial complexity (in $|\mathcal{J}|$ and $n$) by Theorem~\ref{soundness}.  Furthermore, it is readily seen that any feasible solution $b'$ to~\eqref{label4} satisfies $\|b'\|_{0}\geq \|\bar{b}\|_{0}=\|b\|_{0}$. Hence, $b$ obtained by the above recipe is a solution to~\eqref{label4} and the desired assertion follows by observing that all steps in the above construction, yielding $\bar{b}$ have polynomial complexity (in $|\mathcal{J}|$ and $n$). $\hfill\blacksquare$

\paragraph*{Proof of Theorem~\ref{exactSolTheorem}} The proof follows by invoking the PBH eigenvector test in Theorem~\ref{popovvalvector} and the left-eigenbasis that is available by Assumption~1, and noticing that the problem in \eqref{label4} is a restatement of the MCP in  \eqref{mcpStatement}.$\hfill\blacksquare$

\paragraph*{Proof of Theorem~\ref{generalSol2MCP}}
The feasibility of the solution is ensured by proceeding similarly to Theorem~\ref{acharb} and Theorem~\ref{soundness}, when the left-eigenbasis of the dynamics' matrix is considered to invoke the PHB eigenvector criterion. The optimality follows similar steps to those presented in Lemma~\ref{lemma4}.$\hfill\blacksquare$

\paragraph*{Proof of Theorem~\ref{rMCPNP}}
The proof follows by noticing that we can polynomially reduce the MCP to an instance of the rMCP, and invoking Proposition~\ref{proposition2}.  In particular, the rMCP is already the result of such reduction since the MCP can be obtained  when the total number of inputs allowed to fail is $s=0$.$\hfill\blacksquare$

\paragraph*{Proof of Theorem~\ref{mainTheorMCP}}
First, we observe that, by construction of the sets $\{\mathcal S_1,\ldots, \mathcal S_{(s+1)n}\}$ and the demand function $d(i)$, for $i\in\{1,\ldots,n\}$, there exists always $s+1$ entries matching every non-zero entry of the vectors in a left-eigenbasis. This implies that if at most $s$ sensors fail, at least one entry of a column $c$ of $B$ is such that for each left-eigenvector $v.c\neq 0$, implying ${v^{i}}^\intercal B\neq 0$ for $i\in\{1,\ldots,n\}$. Hence, the system is controllable by Theorem~\ref{popovvalvector}, and we have a feasible solution. Now we need to show that the solution is optimal, i.e., there is not another solution with less dedicated inputs to the rMCP. We will proceed by contradiction, so assume that there is a solution to a demand function $d(i)=w$ for $i\in\{1,\ldots,n\}$ and some $w<s+1$. Then, for some entry of a left-eigenvector $v$ it is only ensured the existence of $w$ columns in $B$ whose inner product is not zero. Therefore, if $w$ dedicated inputs fails, i.e., the corresponding columns of $B$ are now zero, then  $B$ is such that $v^\intercal B= 0$, for some eigenvector $v$. Thus, contradicting the assumption that there is a sparser solution to the rMCP.$\hfill\blacksquare$

\paragraph*{Proof of Theorem~\ref{generalSol2rMCP}}
 The proof follows similar steps to those presented in Theorem~\ref{generalSol2MCP}. In particular, one as to recall the notion of merging procedure and the guarantees obtained in Theorem~\ref{mainTheorMCP}.$\hfill\blacksquare$

\paragraph*{Proof of Theorem~\ref{thmain}}	 
From \cite{nphard}, we have that the MCP is NP-hard, and, in particular, the  minimum set covering problem can be polynomially reduced to it. Therefore,  we just need to show that the MCP assuming that $A$ comprises only simple eigenvalues and the left-eigenbasis is known, i.e., under the assumption made in this paper, can be reduced polynomially to the  minimum set covering problem.

To this end,  note that, given the set $\{\bar{v}^{i}\}_{i\in\mathcal{J}}$ of left-eigenvectors of $A$, the MCP is equivalent to problem~\eqref{label4}, the latter being polynomially (in $|\mathcal{J}|$ and $n$) reducible to the minimum  set covering 
problem (see Lemma~\ref{lemma4}). Since $|\mathcal{J}|=n$, the overall reduction to the minimum  set covering problem is polynomial in $n$ and the result follows by invoking Proposition~\ref{proposition1}.

Similar arguments hold with rMCP, where the problem was shown to be NP-hard in Theorem~\ref{NPhardrMCP}, and a reduction to the minimum set multi-covering problem can be obtained as in~Theorem~\ref{mainTheorMCP}.
$\hfill\blacksquare$

\paragraph*{Proof of Theorem~\ref{complexityAlg3}} First, notice  that the output of Algorithm~\ref{algorithm3}, i.e., $B_n(\mathcal M')$, is a feasible solution since the algorithm stops when each of the elements in the universe of the set multi-cover is $s+1$ times covered. 

The computational complexity of Algorithm~\ref{algorithm3} is obtained by the overall complexity of steps~1, 4 and~5. In step~1, we need to compute $(s+1)n$ sets, in step~5 at most  $n$ sets need to be considered, and, in step~4, $(s+1)$ iterations are performed, each with the number of steps of step~5, yielding $(s+1)n$ computational steps. Summing up the complexity of each step, Algorithm~\ref{algorithm3} has, in the worst case, computational complexity of order $\mathcal O(s n)$. In addition, notice that the performance attained in a multi-set covering problem is the same as in the rMCP, as a consequence of Theorem~\ref{thmain}. Furthermore, the solution obtained incurs in an optimality gap of at most $\mathcal O(\log{n})$ since the algorithm implements the greedy algorithm associated with submodular functions, as it is the case of the multi-set covering problem, and the result follows. 
$\hfill\blacksquare$

\end{document}